\documentclass[reqno]{amsart}
\usepackage{amssymb, amsthm, amsmath, xypic}
\usepackage[pagewise]{lineno}
\usepackage{color}

\calclayout

\theoremstyle{definition}
\newtheorem{theorem}{Theorem}[section]

\newtheorem{proposition}[theorem]{Proposition}
\newtheorem{lemma}[theorem]{Lemma}
\newtheorem{corollary}[theorem]{Corollary}
\newtheorem{example}[theorem]{Example}

\newcommand{\Z}{\mathbb{Z}}
\newcommand{\N}{\mathbb{N}}
\renewcommand{\bar}{\overline}

\DeclareMathOperator{\Perm}{\mbox{Perm}}
\DeclareMathOperator{\Hol}{\mbox{Hol}}
\DeclareMathOperator{\Aut}{\mbox{Aut}}

\begin{document}

\title[Bicyclic biskew braces]{Bicyclic biskew braces}

\author{Hal Simpson}
\address{School of Computer Science and Mathematics \\ Keele University \\ Staffordshire \\ ST5 5BG \\ UK}
\curraddr{School of Mathematics \\ University of Leeds \\Leeds \\ LS2 9JT \\ UK}
\email{H.P.Simpson@leeds.ac.uk}

\author{Paul J. Truman}
\address{School of Computer Science and Mathematics \\ Keele University \\ Staffordshire \\ ST5 5BG \\ UK}
\email{P.J.Truman@keele.ac.uk}

\subjclass[2020]{Primary 16T25; Secondary 20N99}

\keywords{braces, skew braces, biskew braces}

\begin{abstract}
We study finite braces whose additive and multiplicative groups are both cyclic. We reinterpret the classification of these braces from the perspective of regular subgroups of permutation groups, and identify which of them are biskew braces.
\end{abstract}

\maketitle

\section{Introduction} \label{sec_introduction}

Braces are generalizations of radical rings introduced by Rump \cite{Ru07a} as an algebraic framework for studying involutive nondegenerate solutions of the set-theoretic Yang-Baxter equation. A (left) \textit{brace} is a triple $ (B,+,\circ) $ in which $ (B,+) $ is an abelian group, $ (B,\circ) $ is a group, and the \textit{brace relation} 
\begin{equation} \label{eqn_brace_relation}
x \circ (y + z) = (x \circ y) - x + (x \circ z) 
\end{equation}
is satisfied for all $ x,y,z \in B $. We call $ (B,+) $ the \textit{additive group}, and $ (B,\circ) $ the \textit{multiplicative group}, of the brace $ B = (B,+,\circ) $. It follows quickly from \eqref{eqn_brace_relation} that $ (B,+) $ and $ (B,\circ) $ share a common identity element, which we denote by $ e $. However, the inverse of an element $ x \in B $ with respect to $ + $ (denoted $ -x $) need not coincide with its inverse with respect to $ \circ $ (denoted $ \bar{x} $). We will study only \textit{finite} braces (that is, those whose underlying set is finite). 

Numerous generalizations and variants of braces have been developed, the most impactful of which is Guarnieri and Vendramin's notion of a \textit{skew brace} \cite{GV17}: these are defined in the same way as braces, but without the assumption that the additive group is abelian (thus Rump's braces are skew braces with abelian additive group). Skew braces yield nondegenerate solutions of the set-theoretic Yang-Baxter equation that are bijective but not necessarily involutive. 

An \textit{isomorphism} of skew braces is a bijective function between the underlying sets that respects both operations. A natural problem is to classify skew braces of given orders, or with given additive or multiplicative groups, up to isomorphism. For example: Rump classifies braces with cyclic additive group \cite{Ru07b}, Bachiller classifies braces of order $ p^{3} $ \cite{Ba15}, and Byott and Ferri enumerate braces whose multiplicative group is dihedral or generalized quaternion \cite{BF25}. Many methods for classifying skew braces exploit their connections with other algebraic objects, such as bijective $ 1 $-cocycles, regular subgroups of permutation groups, or Hopf-Galois structures on finite Galois extensions. In exploring the last of these Childs \cite{Ch19b} introduces \textit{biskew braces}: these are skew braces $ (B,+,\circ) $ such that $ (B,\circ,+) $ is also a skew brace. Properties of biskew braces are explored in \cite{ST23b}, and works such as \cite{Koc21a} describe methods for constructing biskew braces, but we are not aware of any work in the literature that classifies families of biskew braces.

In this paper we classify braces whose additive and multiplicative group are cyclic (\textit{bicyclic braces}), and identify which of these are biskew. Our main result is the following: 

\begin{theorem} \label{theorem_main}
The isomorphically distinct bicyclic braces of order $ n $ correspond bijectively with positive divisors $ w $ of $ n $ with the following properties:
\begin{itemize}
\item $ p \mid w $ for all primes $ p $ that divide $ n $;
\item if $ 4 \mid n $ then $ 4 \mid w $.
\end{itemize}
We may realize the bicyclic brace corresponding to such a divisor $ w $ by $ (\Z_{n},+,\circ) $, where $ + $ is the usual addition on $ \Z_{n} $ and $ \circ $ is defined by $ x \circ y = x + y + wxy $. A bicyclic brace of this form is biskew if and only if $ n \mid w^{2} $. 
\end{theorem}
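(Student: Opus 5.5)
Our plan is to use the correspondence between braces with additive group $ (\Z_{n},+) $ and regular subgroups of $ \Hol(\Z_{n}) = \Z_{n} \rtimes \Z_{n}^{\times} $, with isomorphism classes of braces corresponding to conjugacy classes of regular subgroups under $ \Aut(\Z_{n}) $. A bicyclic brace then corresponds to a cyclic regular subgroup $ G $, generated by some element $ \sigma \colon x \mapsto ux + 1 $. We may take the translation part to be $ 1 $: regularity forces the projection of $ G $ onto $ \Z_{n} $ to be surjective, and conjugating by an automorphism allows us to normalise it to $ 1 $. The first step is to determine for which units $ u $ the permutation $ \sigma $ has order exactly $ n $ and generates a regular subgroup. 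Since $ \sigma^{k}(0) = 1 + u + \cdots + u^{k-1} $, regularity amounts to the map $ k \mapsto \sum_{i<k} u^{i} $ being a bijection of $ \Z_{n} $. The standard lifting-the-exponent analysis shows that this holds exactly when $ u \equiv 1 \pmod{p} $ for every prime $ p \mid n $, and additionally $ u \equiv 1 \pmod{4} $ when $ 4 \mid n $.

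The second step is to classify these subgroups up to conjugacy. Conjugating by the automorphism $ x \mapsto ax $ sends $ \sigma $ to $ x \mapsto ux + a $, and replacing $ \sigma $ by the generator $ \sigma^{k} $ (with $ k $ a unit) changes $ u $ to $ u^{k} $. So the isomorphism class is determined by the cyclic subgroup $ \langle u \rangle \le \Z_{n}^{\times} $. Under the conditions above, $ u $ lies in the subgroup $ 1 + \mathrm{rad}(n)\Z_{n} $, and for odd $ n $, or with the extra factor of $ 4 $ when $ 4\mid n $, this subgroup is cyclic with subgroups $ 1 + w\Z_{n} $. Here $ w $ ranges over the divisors described in the statement, and $ w = n $ gives the trivial brace. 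Hence we write $ \langle u \rangle = 1 + w\Z_{n} $ and choose $ u = 1 + w $. The main obstacle is verifying that $ 1 + w\Z_{n} $ is cyclic and generated by $ 1 + w $, and that distinct $ w $ give non-conjugate subgroups. We will handle this prime by prime via the CRT, using $ \mathrm{ord}(1 + p^{a}) = p^{\,e-a} $ in $ (\Z/p^{e})^{\times} $, which requires $ a \geq 2 $ when $ p = 2 $. Non-conjugacy follows because $ \langle u \rangle $ is invariant under conjugation by elements of the abelian group $ \Aut(\Z_{n}) $.

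To realise the brace concretely, we transport the group structure of $ G $ to $ \Z_{n} $ via $ g \mapsto g(0) $, which gives $ x \circ y = x + (1+w)^{\ell(x)} y $, where $ \ell(x) $ is the exponent of the group element sending $ 0 $ to $ x $. We then check directly that $ x \circ y = x + y + wxy $ defines a brace with cyclic multiplicative group. Left distributivity is immediate, and associativity is the identity $ (1+wx)(1+wy) = 1 + w(x \circ y) $. In this brace $ \lambda_{x} $ is multiplication by $ 1 + wx $, whose image is the subgroup $ 1 + w\Z_{n} $. So this brace corresponds to the same conjugacy class of subgroups and is the required representative.

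Finally, for the biskew condition we need $ (\Z_{n},\circ,+) $ to be a skew brace, that is, $ x + (y \circ z) = (x+y) \circ \bar{x} \circ (x+z) $. Equivalently, each $ \lambda_{x} $ must be an automorphism of $ \circ $ as well as of $ + $. We compute $ \lambda_{x}(y \circ z) = (1+wx)(y + z + wyz) $ and $ \lambda_{x}(y) \circ \lambda_{x}(z) = (1+wx)(y+z) + w(1+wx)^{2}yz $. These agree for all $ y,z $ if and only if $ w(1+wx)wx \equiv 0 $ for all $ x $, which, since $ 1 + wx $ is a unit, holds if and only if $ n \mid w^{2} $.
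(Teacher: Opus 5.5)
Your proposal is correct, but it follows a genuinely different route from the paper's.

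\textbf{Classification.} The paper first proves (Lemmas \ref{lemma_internal_direct_product} and \ref{lemma_decomposition}) that a brace with both groups abelian is the direct product of its Sylow components, which are ideals. It then classifies cyclic regular subgroups of $\Hol(\Z_{p^{e}})$ one prime at a time, normalizing generators to $(1,q^{s})$ or $(1,5^{s})$ and classifying them by $v_{p}(s)$. Finally it glues the prime-power representatives back together. You never decompose the brace. You work in $\Hol(\Z_{n})$ directly and move all the prime-by-prime work into the unit group, where the Chinese remainder theorem shows that $1+\mathrm{rad}(n)\Z_{n}$ (with an extra factor $2$ when $4 \mid n$) is cyclic with subgroups exactly $1+w\Z_{n}$. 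Both proofs use the same invariant: the projection of the regular subgroup to the abelian group $\Aut(\Z_{n})$. Your version makes the bijection with admissible $w$, and the choice $u=1+w$, immediate. The cost is that you must prove the regularity criterion for $x\mapsto ux+1$ on $\Z_{n}$ yourself. Necessity of $u\equiv 1 \pmod{p}$ comes from the orbit of $0$ modulo $p$, which has at most $\mathrm{ord}_{p}(u)<p$ elements. Necessity of $u \equiv 1 \pmod{4}$ comes from the argument of Proposition \ref{prop_2_e_order}.

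\textbf{Biskew condition.} The paper tests whether $\lambda_{+}(1)$ normalizes $\lambda_{\circ}(\Z_{n})$ in $\Perm(\Z_{n})$. You instead require every $\gamma_{x}$ (your $\lambda_{x}$) to lie in $\Aut(\Z_{n},\circ)$. Both collapse to $w^{2}\equiv 0 \pmod{n}$.

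\textbf{Points to make explicit.}
\begin{enumerate}
\item Your biskew criterion holds because the $\lambda$-map of $(\Z_{n},\circ,+)$ is $y\mapsto \bar{x}\circ(x+y)=\gamma_{x}^{-1}(y)$.
\item The translation part $r$ of a generator $(r,u)$ of a regular cyclic subgroup is a unit, since the orbit of $0$ lies in $r\Z_{n}$. This justifies normalizing to $r=1$. It also justifies conjugating $\sigma^{k}$, whose translation part is $1+u+\cdots+u^{k-1}$, back to the form $(1,u^{k})$.
\item Your direct check of $\circ_{w}$ does not actually verify that $(\Z_{n},\circ_{w})$ is cyclic. It is simpler to note that $(1+w)^{\ell}-1=w\,(1+(1+w)+\cdots+(1+w)^{\ell-1})$. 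Hence your transported operation $x+(1+w)^{\ell(x)}y$ is literally $x+y+wxy$, as in Proposition \ref{prop_circle_p}.
\end{enumerate}
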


The classification of bicyclic braces can be extracted from Rump's classification of braces with cyclic additive group \cite{Ru07b}; the characterization of the biskew bicyclic braces is new. Rump describes bicyclic braces via \textit{cocyclic residue classes}; our approach employs the connection between skew braces and regular subgroups of permutation groups mentioned above. We summarize this theory in Section \ref{sec_tools}, where we also show how we may reduce the problem to the classification of bicyclic braces of prime power order. We tackle this problem for powers of odd primes in Section \ref{sec_p_power}, and for powers of $ 2 $ in Section \ref{sec_2_power}, drawing on results originally developed in the context of Hopf-Galois theory. In Section \ref{sec_combining} we combine the results of the preceding sections to prove Theorem \ref{theorem_main}, and illustrate our classification with families of examples. 

\section{Tools for classifying skew braces and biskew braces} \label{sec_tools}

Our strategy has two ingredients: the correspondence between skew braces and certain regular subgroups of permutation groups, and a decomposition of a bicyclic brace as a direct product of bicyclic braces of prime-power order. In this section we summarize these ideas, mostly drawing upon \cite{GV17}, \cite{SV18}, and \cite{HAGMT}. 

A skew brace $ B=(B,+,\circ) $ naturally yields two injective group homomorphisms $ \lambda_{+}, \lambda_{\circ} : B \rightarrow \Perm(B) $ defined by $ \lambda_{+}(x)[y]=x+y $ and  $ \lambda_{\circ}(x)[y] = x \circ y $ (the left regular representations of the binary operations appearing in the skew brace). A consequence of the brace relation \eqref{eqn_brace_relation} is that the subgroup $ \lambda_{\circ}(B) $ is contained in the normalizer of $ \lambda_{+}(B) $ in $ \Perm(B) $. This is known as the \textit{holomorph} of $ (B,+) $ and is denoted $ \Hol(B,+) $; it is the semidirect product of $ \lambda_{+}(B) $ and $ \Aut(B,+) $. Conversely, given a group $ (B,+) $ and a subgroup $ M $ of $ \Hol(B,+) $ that is \textit{regular} (meaning the map $ \nu : M \rightarrow B $ defined by $ \nu(\mu) = \mu[e] $ is a bijection), we may define a binary operation $ \circ $ on $ B $ by
\[ x \circ y = \nu ( \nu^{-1}[x] \nu^{-1}[y] ); \]
we then find that $ (B,\circ) $ is a group, $ \lambda_{\circ}(B) = M $, and $ (B,+,\circ) $ is a skew brace. 

Thus given a group $ (B,+) $ the task of determining all the binary operations $ \circ $ on $ B $ such that $ (B,+,\circ) $ is a skew brace is equivalent to the task of determining all of the regular subgroups of $ \Hol(B,+) $. Since our focus is on classifying skew braces up to isomorphism, we note that two regular subgroups $ M, M' $ of $ \Hol(B,+) $ yield isomorphic skew braces if and only if $ M' = \theta M \theta^{-1} $ for some $ \theta \in \Aut(B,+) $. 

We may also characterize biskew braces from this perspective: if $ (B,+,\circ) $ is a skew brace (so that $ \lambda_{\circ}(B) $ normalizes $ \lambda_{+}(B) $ inside $ \Perm(B) $) then we may also consider $ \Hol(B,\circ) $, being the normalizer of the group $ (B,\circ) $ in $ \Perm(B) $. Reversing the roles of $ + $ and $ \circ $ in the discussion above we see that $ (B,\circ,+) $ is a skew brace (and so $ (B,+,\circ) $ is a biskew brace) if and only if $ \lambda_{+}(B) \subset \Hol(B,\circ) $. More succinctly: $ (B,+,\circ) $ is a biskew brace if and only if $ \lambda_{+}(B) $ and $ \lambda_{\circ}(B) $ mutually normalize one another inside $ \Perm(B) $. 

Next we turn to important substructures of a skew brace $ (B,+,\circ) $ called \textit{ideals}; these are the kernels of skew brace homomorphisms and the substructures that permit the formation of quotients. To give an alternative characterization we note that, since $ \lambda_{\circ}(B) \subseteq \Hol(B,+) $, for each $ x \in B $ we have $ \lambda_{\circ}(x) = \lambda_{+}(z_{x}) \gamma_{x} $ for some $ z_{x} \in B $ and $ \gamma_{x} \in \Aut(B,+) $. Evaluating each side at the identity we find that $ z_{x}=x $ for all $ x \in B $, so for all $ y \in B $ we have  $ x \circ y = x + \gamma_{x}(y) $, which implies that $ \gamma_{x}(y) = -x + (x \circ y) $. It can be shown that a subset $ A $ of $ B $ is an ideal of $ (B,+,\circ) $ if and only if $ (A,+) \trianglelefteq (B,+) $, $ (A,\circ) \trianglelefteq (B,\circ) $, and $ \gamma_{x}(A)=A $ for all $ x \in B $.

If $ B=(B,+,\circ) $ and $ B'=(B',+',\circ') $ are skew braces then the Cartesian product $ B \times B' $, with operations defined by
\begin{align}
(x,x') + (y,y') &= (x + y, \; x' +' y')  \label{align_direct_product_cdot} \\
(x,x') \circ (y,y') &= (x \circ y, \; x' \circ' y'), \label{align_direct_product_circ}
\end{align}
is a skew brace, called the \textit{external direct product} of $ B $ and $ B' $. In this case the subsets $ B \times \{ e \} $ and $ \{ e \} \times B' $ are ideals of $ B \times B' $ whose intersection is trivial and whose product with respect to either operation is equal to $ B \times B' $. We will need the following partial converse of this observation.

\begin{lemma} \label{lemma_internal_direct_product}
Let $ B = (B,+,\circ) $ be a skew brace with $ (B,+) $ and $ (B,\circ) $ abelian, and suppose that $ B $ contains ideals $ A, A' $ such that $ A \cap A' = \{ e \} $ and $ A + A' = B $. Then $ B \cong A \times A' $ as skew  braces. 
\end{lemma}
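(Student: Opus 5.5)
The natural candidate for the isomorphism is the map $ \phi : A \times A' \rightarrow B $ given by $ \phi(a,a') = a + a' $. The plan is to show that $ \phi $ is a bijection and that it respects both operations. Throughout we use that $ (B,+) $ and $ (B,\circ) $ are abelian. We also use the description $ x \circ y = x + \gamma_{x}(y) $ with $ \gamma_{x} \in \Aut(B,+) $, together with the fact that ideals are stable under every $ \gamma_{x} $.

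First, bijectivity. Surjectivity is the hypothesis $ A + A' = B $. For injectivity, suppose $ a + a' = b + b' $. Then $ -b + a = b' - a' $, and since $ + $ is abelian the left side lies in $ A $ and the right side lies in $ A' $. So both sides lie in $ A \cap A' = \{ e \} $, which gives $ a = b $ and $ a' = b' $. Additivity is immediate from commutativity of $ + $:
\[ \phi((a,a')+(b,b')) = a + b + a' + b' = (a+a') + (b+b'). \]

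The main step is multiplicativity, that is, showing $ (a+a') \circ (b+b') = (a \circ b) + (a' \circ b') $. I would first prove that $ a \circ a' = a + a' $ for $ a \in A $ and $ a' \in A' $. Since $ \gamma_{a} $ preserves $ A' $, the element $ a \circ a' = a + \gamma_{a}(a') $ lies in $ a + A' $. Because $ \circ $ is commutative, the same element equals $ a' \circ a = a' + \gamma_{a'}(a) $, which lies in $ a' + A $. Write $ a \circ a' = a + c' = a' + c $ with $ c \in A $ and $ c' \in A' $. Injectivity of $ \phi $ then forces $ c = a $ and $ c' = a' $. Hence $ \gamma_{a}(a') = a' $, and so $ a \circ a' = a + a' $.

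With this in hand, $ \phi(a,a') = a \circ a' $. Using associativity and commutativity of $ \circ $,
\[ (a \circ a') \circ (b \circ b') = (a \circ b) \circ (a' \circ b'). \]
Here $ a \circ b \in A $ and $ a' \circ b' \in A' $, because ideals are closed under $ \circ $. Applying the identity once more turns the right side into $ (a \circ b) + (a' \circ b') = \phi((a,a') \circ (b,b')) $. The only delicate point is the step showing $ \gamma_{a} $ fixes $ A' $ pointwise, and it is handled by the $ \gamma $-invariance of ideals together with commutativity of $ \circ $.
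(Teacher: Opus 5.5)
Your proposal is correct and follows essentially the same route as the paper. Both arguments first show $a \circ a' = a + a'$ for $a \in A$, $a' \in A'$, using the $\gamma$-invariance of the ideals and the commutativity of $\circ$ to push the discrepancy into $A \cap A' = \{e\}$, and then rewrite $(a\circ a')\circ(b\circ b')$ as $(a\circ b)\circ(a'\circ b')$. You phrase the first step via uniqueness of the decomposition $a+a'$ where the paper uses the element $\gamma_{x}(x')-x' = \gamma_{x'}(x)-x$, and your map runs $A \times A' \to B$ rather than $B \to A \times A'$; both differences are cosmetic.
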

\begin{proof}
The hypotheses imply that the map $ (B,+) \rightarrow (A,+) \times (A',+) $ defined by $ x+x' \mapsto (x, \; x') $ is a well defined group isomorphism. We show that this isomorphism respects the definitions of $ \circ $ on each side. 

First let $ x \in A $ and $ x' \in A' $, and consider the quantity $ \gamma_{x}(x')-x' $. On one hand, since $ A' $ is an ideal of $ B $ we have $ \gamma_{x}(x') \in A' $, so $ \gamma_{x}(x')-x' \in A' $. On the other hand, using the definition of $ \gamma_{x} $ and the fact that $ (B,+) $ and $ (B,\circ) $ are both abelian we may write
\[ \gamma_{x}(x')-x' = (x \circ x')-x-x' = (x' \circ x)-x'-x = \gamma_{x'}(x)-x, \]
which lies in $ A $ since $ A $ is an ideal of $ B $. Hence, using the assumption that $ A \cap A' = \{ e \} $, we find that $ x + x' =  x \circ x' $. Now for $ x,y \in A $ and $ x',y' \in A' $ we have
\begin{align*}
(x+x') \circ (y+y') & = (x \circ x') \circ (y \circ y') \\
& = x \circ y \circ x' \circ y' \\
& = (x \circ y) + (x' \circ y') \\
& \mapsto (x \circ y, \; x' \circ y') \\
& = (x,x') \circ (y,y').
\end{align*}
Hence $ B \cong A \times A' $ as skew  braces.
\end{proof}

Retaining the hypotheses of Lemma \ref{lemma_internal_direct_product} and focusing on the Sylow subgroups of the additive group of a brace we obtain the following decomposition result:

\begin{lemma} \label{lemma_decomposition}
Let $ B = (B,+,\circ) $ be a skew brace with $ (B,+) $ and $ (B,\circ) $ abelian, let $ p_{1}, \ldots , p_{r} $ be the distinct prime numbers dividing $ |B| $, and for each $ i=1, \ldots ,r $ let $ (P_{i},+) $ be the unique Sylow $ p_{i} $-subgroup of $ (B,+) $. Then each $ P_{i} $ is an ideal of $ B $, and $ B $ is isomorphic as a skew brace to the direct product of the $ P_{i} $.
\end{lemma}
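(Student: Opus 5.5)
We first show that each $ P_{i} $ is an ideal, using the characterization recalled above: a subset $ A $ is an ideal iff $ (A,+) \trianglelefteq (B,+) $, $ (A,\circ) \trianglelefteq (B,\circ) $, and $ \gamma_{x}(A)=A $ for all $ x \in B $. Since $ (B,+) $ is abelian, $ (P_{i},+) $ is a normal subgroup; being the unique Sylow $ p_{i} $-subgroup, it is characteristic, so it is preserved by every automorphism of $ (B,+) $. In particular $ \gamma_{x}(P_{i}) = P_{i} $ for all $ x \in B $, since each $ \gamma_{x} \in \Aut(B,+) $.

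The step requiring a little care is showing that $ P_{i} $ is closed under $ \circ $ and forms a subgroup of $ (B,\circ) $. For $ x,y \in P_{i} $ we have $ x \circ y = x + \gamma_{x}(y) $, and $ \gamma_{x}(y) \in P_{i} $, so $ x \circ y \in P_{i} $. Thus $ P_{i} $ is a nonempty finite subset of the finite group $ (B,\circ) $ closed under $ \circ $, hence a subgroup. Since $ (B,\circ) $ is abelian, this subgroup is normal. (Alternatively, $ \bar{x} = \gamma_{x}^{-1}(-x) $ lies in $ P_{i} $ directly.) Hence each $ P_{i} $ is an ideal.

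For the decomposition we induct on $ r $ using Lemma~\ref{lemma_internal_direct_product}. Put $ A = P_{1} $ and $ A' = P_{2} + \cdots + P_{r} $, the Hall $ p_{1}' $-subgroup of $ (B,+) $. The same argument applies to $ A' $: it is the unique subgroup of $ (B,+) $ of its order, hence characteristic, hence $ \gamma_{x} $-stable, and therefore closed under $ \circ $, so it is an ideal. Orders show that $ A \cap A' = \{ e \} $, and $ A + A' = B $ because $ (B,+) $ is the direct sum of its Sylow subgroups. Lemma~\ref{lemma_internal_direct_product} then gives $ B \cong P_{1} \times A' $.

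Now $ A' $, with the restricted operations, is a skew brace with abelian additive and multiplicative groups. Its Sylow subgroups are $ P_{2}, \ldots, P_{r} $, which are ideals of $ A' $ by the first part. Induction on $ r $ gives $ A' \cong P_{2} \times \cdots \times P_{r} $, and hence $ B \cong P_{1} \times \cdots \times P_{r} $. The only real obstacle is verifying closure under $ \circ $, which the $ \gamma_{x} $-invariance of characteristic subgroups handles as above.
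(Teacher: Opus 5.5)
Your proof is correct and follows essentially the same route as the paper. In both, $\gamma_{x}$-invariance of the characteristic Sylow subgroup $P_{i}$ (and of its prime-to-$p_{i}$ complement) gives closure under $\circ$ and hence the ideal property, and Lemma~\ref{lemma_internal_direct_product} plus induction on the number of primes gives the decomposition; you are a bit more explicit than the paper about using finiteness to pass from closure to a subgroup, and about why the induction hypothesis applies to $A'$.
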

\begin{proof}
First let $ p $ denote one of the prime numbers $ p_{i} $ and let $ P=P_{i} $. Since $ \gamma_{x} \in \Aut(B,+) $ for each $ x \in B $, the uniqueness of $ P $ implies that $ \gamma_{x}(P)=P $ for each $ x \in B $. In particular, for $ x,y \in P $ we have $ x+\gamma_{x}(y) = x \circ y \in P $, so $ (P,\circ) \leq (B,\circ) $; since $ (B,\circ) $ is abelian this implies that $ (P,\circ) \trianglelefteq  (B,\circ) $, and so $ P $ is an ideal of the skew brace $ B $. 

Now let $ (Q,+) $ be the prime-to-$ p $ part of $ (B,+) $, so that $ Q \cap P = \{ e \} $ and $ Q + P = B $. Arguing as above, we find that $ Q $ is an ideal of the skew brace $ B $, so $ B \cong P \times Q $ as skew braces by Lemma \ref{lemma_internal_direct_product}. The result follows by induction.
\end{proof}


\section{Bicyclic braces of odd prime power order} \label{sec_p_power}

In this section we classify bicyclic braces of odd prime power order by studying the cyclic regular subgroups of the holomorph of a cyclic group of odd prime power order. These regular subgroups are studied by Kohl \cite{Ko98} and Childs \cite{TWE} in the context of Hopf-Galois theory: we give alternative proofs of their results. 

We begin with some results concerning geometric sums modulo odd prime powers; we shall refer to these repeatedly in what follows. Throughout, let $ p $ denote a fixed odd prime number.

\begin{lemma} \label{lemma_geometric_congruences}
Let $ t \in \Z $ with $ v_{p}(t-1)=m \geq 1 $. 
\begin{enumerate}
\item If $ i \in \N $ then $ 1 + t + t^{2} +\cdots + t^{p^{i}-1} \equiv p^{i} \pmod{p^{m+i}} $. \label{lemma_geometric_congruences_item_1}
\item If $ j \in \N $ then $ v_{p}(1+t+t^{2}+\cdots + t^{j-1}) = v_{p}(j) $. \label{lemma_geometric_congruences_item_2}
\end{enumerate}
\end{lemma}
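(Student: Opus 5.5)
We prove part (\ref{lemma_geometric_congruences_item_1}) by induction on $ i $, and then deduce part (\ref{lemma_geometric_congruences_item_2}) from it by writing $ j = p^{k} j' $ with $ p \nmid j' $.

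For part (\ref{lemma_geometric_congruences_item_1}), write $ S_{i} = 1 + t + \cdots + t^{p^{i}-1} $. The case $ i=0 $ is trivial, since $ S_{0} = 1 $. For the inductive step we use the factorization
\[ S_{i+1} = S_{i}\,(1 + t^{p^{i}} + t^{2p^{i}} + \cdots + t^{(p-1)p^{i}}). \]
A standard binomial argument, which uses that $ p $ is odd and $ m \geq 1 $, gives $ v_{p}(t^{p^{i}}-1) = m+i $. Write $ s = t^{p^{i}} = 1 + a $ with $ v_{p}(a) = m+i $. Then
\[ \sum_{k=0}^{p-1} s^{k} \equiv \sum_{k=0}^{p-1} (1 + ka) = p + a\binom{p}{2} \pmod{a^{2}}. \]
Since $ p $ is odd, $ p \mid \binom{p}{2} $, so this sum is $ \equiv p \pmod{p^{m+i+1}} $; the term $ a^{2} $ is also absorbed, because $ 2(m+i) \geq m+i+1 $. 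By induction $ S_{i} = p^{i} + p^{m+i}u $, so
\[ S_{i+1} = (p^{i} + p^{m+i}u)(p + p^{m+i+1}u') \equiv p^{i+1} \pmod{p^{m+i+1}}, \]
as required. Alternatively, we could avoid the induction by writing $ S_{i} = (t^{p^{i}}-1)/(t-1) $ with $ t = 1+b $. Expanding binomially gives $ S_{i} = \sum_{k \geq 1} \binom{p^{i}}{k} b^{k-1} $, and we would check that $ v_{p}\bigl(\binom{p^{i}}{k} b^{k-1}\bigr) \geq m+i $ for $ k \geq 2 $. That check uses $ v_{p}\binom{p^{i}}{k} = i - v_{p}(k) $ together with $ (k-1)m \geq m + v_{p}(k) $, which holds for $ k \geq 2 $ and $ p $ odd (the case $ k=p=3 $, $ m=1 $ is the tight one). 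I would choose whichever version is cleaner; the induction avoids case-checking.

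For part (\ref{lemma_geometric_congruences_item_2}), write $ j = p^{k} j' $ with $ p \nmid j' $. Then
\[ 1 + t + \cdots + t^{j-1} = S_{k}\,(1 + s + \cdots + s^{j'-1}), \qquad s = t^{p^{k}}. \]
By part (\ref{lemma_geometric_congruences_item_1}), $ v_{p}(S_{k}) = k $, because $ p^{k} $ has valuation $ k < m+k $. Moreover $ s \equiv 1 \pmod p $, so the second factor is $ \equiv j' \not\equiv 0 \pmod p $. Hence the total valuation is $ k = v_{p}(j) $.

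The main obstacle is the valuation estimate $ v_{p}(t^{p^{i}}-1) = m+i $, or equivalently the bound on the binomial terms; this is exactly where oddness of $ p $ is needed. I would prove it as a one-line sublemma via $ (1+a)^{p} = 1 + pa + \binom{p}{2}a^{2} + \cdots $: since $ p \mid \binom{p}{2} $ and $ v_{p}(a^{p}) = p\,v_{p}(a) \geq v_{p}(a)+2 $ for $ p \geq 3 $ and $ v_{p}(a) \geq 1 $, all terms beyond $ pa $ have valuation at least $ v_{p}(a)+2 $, so the valuation increases by exactly one at each $ p $-th power.
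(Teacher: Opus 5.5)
Your proof is correct, but part (i) takes a different route from the paper; your part (ii) is exactly the paper's argument.

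\textbf{How the paper does part (i).} It uses no binomial expansion. Since $v_{p}(t-1)=m$, the element $t$ generates the subgroup of $\Z_{p^{m+i}}^{\times}$ consisting of units congruent to $1$ modulo $p^{m}$. This subgroup is cyclic because $p$ is odd, and it has order $p^{i}$. So the powers $1,t,\ldots,t^{p^{i}-1}$ are just a reordering of the residues $1+kp^{m}$ for $0\le k<p^{i}$. The geometric sum therefore collapses to an arithmetic one:
\[ p^{i}+\tfrac{1}{2}(p^{i}-1)p^{i}p^{m}\equiv p^{i} \pmod{p^{m+i}}. \]
This is a one-line computation, but it relies on the cyclic structure of $\Z_{p^{m+i}}^{\times}$, i.e.\ on primitive roots. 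The standard proof of that fact is essentially the valuation statement $v_{p}(t^{p^{i}}-1)=m+i$ that you isolate.

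\textbf{What your route buys.} Your induction, and also your direct binomial variant, is longer but fully elementary and self-contained. It makes visible where oddness is really used: $p\mid\binom{p}{2}$ in the inductive step, or $v_{p}(2)=0$ at $k=2$ in the binomial version. By contrast, the identity $v_{p}(t^{p^{i}}-1)=m+i$ also holds for $p=2$ when $m\ge 2$. So it is not the true crux, despite your remark that this is where oddness is needed.

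\textbf{A simplification.} Your separate sublemma is redundant. The inductive step only needs the lower bound $v_{p}(s-1)\ge m+i$, and this comes directly from the inductive hypothesis. Indeed $s-1=(t-1)S_{i}$, and $S_{i}\equiv p^{i}\pmod{p^{m+i}}$ with $m\ge 1$ gives $v_{p}(S_{i})=i$, so $v_{p}(s-1)=m+i$. The step you flagged as the main obstacle therefore disappears.
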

\begin{proof}
Since $ v_{p}(t-1)=m $ the element $ t $ generates the subgroup of $ \Z_{p^{m+i}}^{\times} $ consisting of units that are congruent to $ 1 $ modulo $ p^{m} $, which has order $ p^{i} $. Hence the geometric sum in \ref{lemma_geometric_congruences_item_1} is congruent modulo $ p^{m+i} $ to the sum of the elements in this subgroup, which is
\[ \sum_{k=0}^{p^{i}-1} 1 + kp^{m} = p^{i} + \frac{1}{2}(p^{i}-1)p^{i}p^{m} \equiv p^{i} \pmod{p^{m+i}}. \]
This establishes \ref{lemma_geometric_congruences_item_1}. 

For \ref{lemma_geometric_congruences_item_2}, write $ j = j' p^{i} $ with $ \gcd(j',p)=1 $ (and possibly $ i=0 $). Then
\[ 1+t+t^{2}+\cdots + t^{j-1} = (1+t+t^{2}+\cdots + t^{p^{i}-1})(1+t^{p^{i}}+t^{2p^{i}}+\cdots + t^{(j'-1)p^{i}}). \]
The first factor is congruent to $ p^{i} $ modulo $ p^{m+i} $, by \ref{lemma_geometric_congruences_item_1}. The second factor consists of $ j' $ terms, each congruent to $ 1 $ modulo $ p $; since $ \gcd(j',p)=1 $ this factor is a unit modulo $ p $. Hence $ v_{p}(1+t+t^{2}+\cdots + t^{j-1}) = i = v_{p}(j) $. 
\end{proof}

Without loss of generality we identify our additive group with $ \Z_{p^{e}} $ and identify $ \Aut(\Z_{p^{e}}) $ with $ \Z_{p^{e}}^{\times} = \langle d \rangle $, where $ d $ is a fixed primitive root modulo $ p^{e} $. Hence $ \Hol(\Z_{p^{e}}) \cong \Z_{p^{e}} \rtimes \Z_{p^{e}}^{\times} $; explicitly, the operation in $ \Hol(\Z_{p^{e}}) $ is given by 
\[ (r,t)(r',t') = (r + tr', tt'). \]
Since the projection $ \Hol(\Z_{p^{e}}) \rightarrow \Z_{p^{e}}^{\times} $ is a homomorphism, an element of $ \Hol(\Z_{p^{e}})  $ has $ p $-power order only if its image in $ \Z_{p^{e}}^{\times} $ lies in the unique Sylow $ p $-subgroup of $ \Z_{p^{e}}^{\times} $; this consists of the units that are congruent to $ 1 $ modulo $ p $, has order $ p^{e-1} $, and is generated by $ q = d^{p-1} $.

\begin{proposition} \label{prop_p_e_order} 
The elements of $ \Hol(\Z_{p^{e}}) $ of order $ p^{e} $ are precisely those of the form $ (r,q^{s}) $ with $ 0 < s \leq p^{e-1} $ and $ \gcd(r,p)=1 $. 
\end{proposition}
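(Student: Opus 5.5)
Both directions rest on one explicit formula for powers in $ \Hol(\Z_{p^{e}}) $. An induction using the multiplication rule $ (r,t)(r',t') = (r+tr', tt') $ gives, for $ j \in \N $,
\[ (r,t)^{j} = \bigl( r(1+t+t^{2}+\cdots+t^{j-1}), \; t^{j} \bigr). \]
The order of $ (r,t) $ is therefore the least $ j \geq 1 $ for which $ t^{j} = 1 $ in $ \Z_{p^{e}}^{\times} $ and $ r(1+t+\cdots+t^{j-1}) \equiv 0 \pmod{p^{e}} $. Lemma \ref{lemma_geometric_congruences} controls the geometric sum.

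\textbf{Necessity.} Suppose $ (r,t) $ has order $ p^{e} $. By the remark before the proposition, $ t $ lies in the Sylow $ p $-subgroup $ \langle q \rangle $ of $ \Z_{p^{e}}^{\times} $. Hence $ t = q^{s} $ for a unique $ s $ with $ 0 < s \leq p^{e-1} $, where $ s = p^{e-1} $ corresponds to $ t = 1 $. The order of $ t $ divides $ p^{e-1} $, so $ t^{p^{e-1}} = 1 $. Consequently $ (r,t)^{p^{e-1}} = (r S, 1) $ with $ S = 1+t+\cdots+t^{p^{e-1}-1} $.

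If $ t = 1 $ then $ S = p^{e-1} $. Otherwise $ v_{p}(t-1) = m \geq 1 $, and Lemma \ref{lemma_geometric_congruences}\ref{lemma_geometric_congruences_item_2} gives $ v_{p}(S) = e-1 $. In both cases $ v_{p}(S) = e-1 $. If $ p \mid r $ then $ rS \equiv 0 \pmod{p^{e}} $, so the order would divide $ p^{e-1} $, a contradiction. Hence $ \gcd(r,p) = 1 $.

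\textbf{Sufficiency.} Let $ t = q^{s} $ and $ \gcd(r,p) = 1 $. The same computation shows $ (r,t)^{p^{e}} = (rS', 1) $ with $ v_{p}(S') = e $, so $ rS' \equiv 0 \pmod{p^{e}} $. Thus $ (r,t)^{p^{e}} $ is the identity and the order divides $ p^{e} $. It remains to see that $ (r,t)^{p^{e-1}} \neq 1 $. This holds because its first coordinate $ rS $ has $ p $-adic valuation exactly $ e-1 < e $, using $ \gcd(r,p) = 1 $. So the order is exactly $ p^{e} $.

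The only delicate point is the case $ t = 1 $, where Lemma \ref{lemma_geometric_congruences} does not apply because $ v_{p}(t-1) $ is infinite. It is handled directly, since there the sum equals $ j $. Otherwise the argument is just Lemma \ref{lemma_geometric_congruences}\ref{lemma_geometric_congruences_item_2} applied with $ j = p^{e-1} $ and $ j = p^{e} $. The power formula itself is a routine induction.
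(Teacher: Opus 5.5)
Your proof is correct and takes essentially the same approach as the paper. Both use the power formula $(r,t)^{c} = (r(1+t+\cdots+t^{c-1}), t^{c})$ together with Lemma \ref{lemma_geometric_congruences} part \ref{lemma_geometric_congruences_item_2} to decide when the first coordinate vanishes modulo $p^{e}$. Your explicit appeal to the Sylow remark for $t \in \langle q \rangle$ and your separate handling of $t=1$ are refinements of steps the paper leaves implicit; for $t=1$ the paper effectively works with an integer lift of $q^{s}$.
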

\begin{proof}
Let $ (r,q^{s}) \in \Hol(\Z_{p^{e}}) $, and write $ t=q^{s} $. Then
\[ (r,q^{s})^{c} = (r(1+t+t^{2}+\cdots + t^{c-1}),q^{sc}), \]
which is equal to $ (0,1) $ if and only if $ p^{e-1} \mid sc $ and $ p^{e} \mid r(1+t+t^{2}+\cdots + t^{c-1}) $. By Lemma \ref{lemma_geometric_congruences} part \ref{lemma_geometric_congruences_item_2} the $ p $-valuation of the geometric sum is equal to $ v_{p}(c) $. Thus if $ \gcd(r,p)=1 $ then the minimal positive $ c $ such that $ p^{e} \mid r(1+t+t^{2}+\cdots + t^{c-1}) $ is $ c=p^{e} $; in this case we have $ p^{e-1} \mid sc $ regardless of $ s $, and so $ (r,q^{s}) $ has order $ p^{e} $. On the other hand if $ \gcd(r,p)>1 $ then choosing $ c=p^{e-1} $ we have $ p^{e} \mid r(1+t+t^{2}+\cdots + t^{c-1}) $, and $ p^{e-1} \mid sc $ regardless of $ s $, so $ (r,q^{s}) $ has order at most $ p^{e-1} $.
\end{proof}


\begin{proposition} \label{prop_p_e_regular} 
Every element of $ \Hol(\Z_{p^{e}}) $ of order $ p^{e} $ generates a regular subgroup of $ \Hol(\Z_{p^{e}}) $. 
\end{proposition}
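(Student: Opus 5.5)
Let $\mu=(r,q^{s})$ be an element of order $p^{e}$; by Proposition \ref{prop_p_e_order} we have $\gcd(r,p)=1$. Write $t=q^{s}$ and $M=\langle\mu\rangle$, a cyclic group of order $p^{e}=|\Z_{p^{e}}|$. Since $|M|=|\Z_{p^{e}}|$, $M$ is regular exactly when the map $\nu:M\rightarrow\Z_{p^{e}}$, $\nu(\mu^{c})=\mu^{c}[0]$, is injective, or equivalently surjective. The plan is to compute this map explicitly and show that it is injective.

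The action of $(r,t)\in\Hol(\Z_{p^{e}})$ on $\Z_{p^{e}}$ is $y\mapsto r+ty$, so $(r,t)[0]=r$. By the power formula already obtained in the proof of Proposition \ref{prop_p_e_order},
\[ \mu^{c}[0]=r(1+t+t^{2}+\cdots+t^{c-1}). \]
Suppose $\mu^{c}[0]=\mu^{c'}[0]$ with $0\le c'<c<p^{e}$. Then $\mu^{c-c'}$ fixes $0$, since $\mu^{-c'}\mu^{c}[0]=\mu^{-c'}\mu^{c'}[0]=0$. So it suffices to show that no $\mu^{j}$ with $0<j<p^{e}$ fixes $0$. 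Equivalently, the stabilizer of $0$ in $M$ is trivial.

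For such $j$ we have $\mu^{j}[0]=r(1+t+\cdots+t^{j-1})$. If $s$ is such that $t\neq1$, then $v_{p}(t-1)=m\ge1$ because $t\equiv1\pmod p$, and Lemma \ref{lemma_geometric_congruences} part \ref{lemma_geometric_congruences_item_2} gives
\[ v_{p}\bigl(r(1+\cdots+t^{j-1})\bigr)=v_{p}(j)<e. \]
Hence $\mu^{j}[0]\neq0$ in $\Z_{p^{e}}$. The case $t=1$, including $s=p^{e-1}$, has to be checked separately, since the lemma needs $v_{p}(t-1)$ to be finite. It is immediate there: the sum is $j$, and $rj\not\equiv0\pmod{p^{e}}$ for $0<j<p^{e}$. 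Alternatively, one can work with a representative $t\in\Z$ satisfying $v_{p}(t-1)\ge e$, so that the congruence argument still applies modulo $p^{e}$.

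Therefore $\nu$ is injective, hence bijective, and $M$ is regular. The only step that needs care is applying Lemma \ref{lemma_geometric_congruences} to residues modulo $p^{e}$ rather than to integers. To handle it, choose an integer lift of $t$. Then observe that only the valuation $v_{p}(j)<e$ matters modulo $p^{e}$, and this is independent of the choice of lift.
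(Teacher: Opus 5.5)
Your proof is correct and follows essentially the same argument as the paper. Both reduce regularity to triviality of the stabilizer of $0$ in $\langle \mu \rangle$, write $\mu^{j}[0]=r(1+t+\cdots+t^{j-1})$, and combine $\gcd(r,p)=1$ with Lemma \ref{lemma_geometric_congruences} part \ref{lemma_geometric_congruences_item_2} to get $v_{p}(\mu^{j}[0])=v_{p}(j)<e$. Your separate handling of the case $t=1$ and of integer lifts is extra care that the paper leaves implicit.
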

\begin{proof}
Let $ (r,q^{s}) \in \Hol(\Z_{p^{e}}) $ have order $ p^{e} $. It is sufficient to show that the only power of $ (r,q^{s}) $ that stabilizes the identity element $ 0 \in \Z_{p^{e}} $ is the identity. Suppose that $ (r,q^{s})^{c}[0] = 0 $ for some $ c = 0, \ldots ,p^{e}-1 $. Then writing $ t=q^{s} $ as above we have
\[ (r,t)^{c}[0] = (r(1+t+t^{2}+ \cdots +t^{c-1}),t^{c}) [0] = r(1+t+t^{2}+ \cdots + t^{c-1}) \]
so $ p^{e} \mid r(1+t+t^{2}+ \cdots + t^{c-1}) $. Since $ \gcd(r,p)=1 $ by Proposition \ref{prop_p_e_order}, and $ v_{p}(1+t+t^{2}+\cdots + t^{c-1}) = v_{p}(c) $ by Lemma \ref{lemma_geometric_congruences} part \ref{lemma_geometric_congruences_item_2}, this implies $ c=0 $, as required. 
\end{proof}

\begin{corollary}
Each cyclic regular subgroup of $ \Hol(\Z_{p^{e}}) $ has a unique generator of the form $ (1,q^{s}) $ with $ 0 < s \leq p^{e-1} $. There are precisely $ p^{e-1}$ such subgroups. 
\end{corollary}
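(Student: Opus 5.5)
We prove the corollary by describing the cyclic regular subgroups through their generators of order $p^{e}$ and then choosing a canonical generator in each. A cyclic regular subgroup $M$ of $\Hol(\Z_{p^{e}})$ has order $p^{e}$, so any generator of $M$ has order $p^{e}$. By Proposition \ref{prop_p_e_order} each such generator has the form $(r,q^{s})$ with $\gcd(r,p)=1$ and $0<s\leq p^{e-1}$. Conversely, Proposition \ref{prop_p_e_regular} shows that every such element generates a cyclic regular subgroup. So the cyclic regular subgroups are exactly the subgroups $\langle (r,q^{s})\rangle$ with $\gcd(r,p)=1$, and what remains is to pick one generator from each.

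\textbf{Existence.} Using the power formula from the proof of Proposition \ref{prop_p_e_order}, with $t=q^{s}$,
\[ (r,t)^{c} = \bigl(r(1+t+\cdots+t^{c-1}),\, t^{c}\bigr). \]
Regularity means the map $c \mapsto (r,t)^{c}[0] = r(1+t+\cdots+t^{c-1})$ is a bijection from $\{0,\ldots,p^{e}-1\}$ onto $\Z_{p^{e}}$. Hence there is a unique $c$ modulo $p^{e}$ with $(r,t)^{c}[0]=1$. Since $1$ is a unit, Lemma \ref{lemma_geometric_congruences} part \ref{lemma_geometric_congruences_item_2} forces $v_{p}(c)=0$. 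Therefore $(r,t)^{c}$ is also a generator of $M$, and it has first coordinate $1$. Its second coordinate $t^{c}=q^{sc}$ is a power of $q$ and can be written as $q^{s'}$ with $0<s'\leq p^{e-1}$, because $q$ has order $p^{e-1}$.

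\textbf{Uniqueness.} Every element of $M$ is determined by its value at $0$, since $M$ is regular. The element $(1,q^{s'})$ sends $0$ to $1$, so it is the unique element of $M$ with first coordinate $1$. Moreover $q^{s'}$ determines $s'$ in the range $0<s'\leq p^{e-1}$. Hence each cyclic regular subgroup has exactly one generator of the form $(1,q^{s})$ with $0<s\leq p^{e-1}$.

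\textbf{Counting.} Each of the $p^{e-1}$ elements $(1,q^{s})$ with $0<s\leq p^{e-1}$ generates a cyclic regular subgroup, by Propositions \ref{prop_p_e_order} and \ref{prop_p_e_regular}. By uniqueness, distinct values of $s$ give distinct subgroups. So the assignment $s \mapsto \langle (1,q^{s})\rangle$ is a bijection onto the set of cyclic regular subgroups, and there are exactly $p^{e-1}$ of them.

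The only point needing care is checking that the power $(r,t)^{c}$ with first coordinate $1$ is still a generator, and the valuation lemma settles this directly.
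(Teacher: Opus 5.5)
Your proposal is correct and follows essentially the paper's argument: regularity singles out the unique element of $M$ sending $0$ to $1$, which is the element of $M$ with first coordinate $1$, and the count of $p^{e-1}$ comes from $q$ having order $p^{e-1}$. The only difference is in checking that this element generates $M$. You write it as $(r,t)^{c}$ and use Lemma \ref{lemma_geometric_congruences} to get $\gcd(c,p)=1$, whereas the paper applies Proposition \ref{prop_p_e_order} directly to $(1,q^{s})$, since $\gcd(1,p)=1$.
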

\begin{proof}
Since each cyclic subgroup of $ \Hol(\Z_{p^{e}}) $ is regular on $ \Z_{p^{e}} $ each contains a unique element of the form $ (1,q^{s}) $, which generates the subgroup since $ \gcd(1,p)=1 $. The final claim follows immediately since $ q $ has order $ p^{e-1} $ in $ \Z_{p^{e}}^{\times} $.
\end{proof}

Next we determine when two cyclic regular subgroups of $ \Hol(\Z_{p^{e}}) $ yield isomorphic bicyclic braces. By the results summarized in Section \ref{sec_tools} this occurs if and only if the subgroups are conjugate by an element of $ \Z_{p^{e}}^{\times} $.

\begin{proposition}\label{prop_iso_odd}
Two regular cyclic subgroups $ \langle (1,q^{s}) \rangle $ and $ \langle (1,q^{s'}) \rangle $ of $ \Hol(\Z_{p^{e}}) $ are conjugate by an element of $ \Z_{p^{e}}^{\times} $ if and only if $ v_{p}(s) = v_{p}(s') $. 
\end{proposition}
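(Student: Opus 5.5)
Conjugating by $ (0,u) $ with $ u \in \Z_{p^{e}}^{\times} $ gives
\[ (0,u)(1,q^{s})(0,u^{-1}) = (u, q^{s}), \]
so $ (0,u)\langle (1,q^{s}) \rangle (0,u)^{-1} = \langle (u,q^{s}) \rangle $. By the Corollary, this subgroup has a unique generator of the form $ (1,q^{s''}) $, and it is a power of $ (u,q^{s}) $. So the plan is to find the exponent $ c $ with $ (u,q^{s})^{c} = (1,q^{sc}) $, which will give $ s'' \equiv sc \pmod{p^{e-1}} $. We will then check how $ v_{p} $ behaves.

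Write $ t = q^{s} $. Then $ (u,t)^{c} = (u(1+t+\cdots+t^{c-1}), t^{c}) $, and we need the first coordinate to equal $ 1 $. Since $ (u,t) $ generates a regular subgroup (Proposition \ref{prop_p_e_regular}), such a $ c $ exists and is unique modulo $ p^{e} $. It satisfies $ u(1+t+\cdots+t^{c-1}) \equiv 1 \pmod{p^{e}} $, so the geometric sum is a unit. By Lemma \ref{lemma_geometric_congruences} part \ref{lemma_geometric_congruences_item_2}, applied with $ v_{p}(t-1)\geq 1 $ (true since $ t \equiv 1 \pmod p $; the case $ t=1 $ is trivial, with geometric sum $ c $), we get $ p \nmid c $. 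Hence $ s'' \equiv sc \pmod{p^{e-1}} $ with $ c $ a $ p $-adic unit. Since $ s, s'' $ are taken in $ (0,p^{e-1}] $, their valuations are at most $ e-1 $, and it follows that $ v_{p}(s'') = v_{p}(s) $. This proves the ``only if'' direction.

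For the converse, suppose $ v_{p}(s) = v_{p}(s') $. Then $ s' \equiv sc \pmod{p^{e-1}} $ for some $ c $ coprime to $ p $; we may adjust $ c $ by multiples of $ p^{e-1} $ while keeping $ p \nmid c $, and then regard $ c $ modulo $ p^{e} $. Put $ t = q^{s} $ and $ g = 1+t+\cdots+t^{c-1} $. By Lemma \ref{lemma_geometric_congruences}, $ v_{p}(g) = v_{p}(c) = 0 $, so $ g $ is a unit. Set $ u = g^{-1} $. Then $ (u,t)^{c} = (ug, t^{c}) = (1, q^{sc}) = (1,q^{s'}) $. Because $ c $ is coprime to the order $ p^{e} $, this element generates $ \langle (u,t) \rangle $, which is the conjugate of $ \langle (1,q^{s}) \rangle $ by $ (0,u) $. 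Hence the two subgroups are conjugate.

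The main subtlety is the bookkeeping of exponents: $ c $ is defined modulo $ p^{e} $ in the holomorph, while $ s $ lives modulo $ p^{e-1} $. I need to check that $ q^{sc} $ depends only on $ sc $ modulo $ p^{e-1} $ (it does, since $ q $ has order $ p^{e-1} $), and that the valuation comparison is valid for representatives in $ (0,p^{e-1}] $, including the case $ s = p^{e-1} $, i.e.\ $ t=1 $.
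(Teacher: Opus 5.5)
Your proof is correct. The converse direction is the paper's argument in a slightly different order. Your conjugating element $(0,u)$ with $u=g^{-1}$ is exactly the paper's $(0,h)^{-1}$: you conjugate first and then take the $c$-th power, whereas the paper computes $(1,q^{s})^{c}=(h,q^{s'})$ and then conjugates.

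The ``only if'' direction is where you genuinely differ. The paper simply projects onto the $\Z_{p^{e}}^{\times}$ coordinate. This projection is a homomorphism and $\Z_{p^{e}}^{\times}$ is abelian, so conjugate subgroups have the same image. Hence $\langle q^{s}\rangle=\langle q^{s'}\rangle$, so $\gcd(s,p^{e-1})=\gcd(s',p^{e-1})$. That argument needs neither regularity, nor the uniqueness statement of the Corollary, nor Lemma \ref{lemma_geometric_congruences}.

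Your route instead computes the conjugate $\langle (u,q^{s})\rangle$ explicitly, locates its normalized generator $(1,q^{sc})$, and shows $p\nmid c$ via the geometric sum. It is longer, but it gives an explicit description of the action: conjugation by $(0,u)$ replaces $s$ by $sc$, where $u(1+t+\cdots+t^{c-1})\equiv 1 \pmod{p^{e}}$. Both directions then come out of this single computation.

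One small citation point: regularity of $\langle (u,q^{s})\rangle$ needs Proposition \ref{prop_p_e_order} (to get order $p^{e}$, since $u$ is a unit) before Proposition \ref{prop_p_e_regular} applies. Alternatively, just observe that conjugating by an automorphism, which fixes $0$, preserves regularity.
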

\begin{proof}
First suppose $ \langle (1,q^{s}) \rangle = (0,h) \langle (1,q^{s'}) \rangle (0,h)^{-1} $ for some $ h \in \Z_{p^{e}}^{\times} $. Projecting to the $ \Z_{p^{e}}^{\times}  $ component we have $ \langle q^{s} \rangle = h \langle q^{s'} \rangle h^{-1} $ in $ \Z_{p^{e}}^{\times} $, so in fact we have $ \langle q^{s} \rangle = \langle q^{s'} \rangle $. Hence $ \gcd(s,p^{e-1})=\gcd(s',p^{e-1}) $, and so $ v_{p}(s)=v_{p}(s') $. 

Conversely, suppose that $ v_{p}(s) = v_{p}(s') $. Then $ s' \equiv c s \pmod{p^{e-1}} $ for some $ c $ with $ (c,p)=1 $. Writing $ t = q^{s} $ and $ t'=q^{s'} $ we have 
\[ (1,q^{s})^{c} = (1+t+ \cdots + t^{c-1},t^{c}) = (h,t') \mbox{ say}  \]
and $ h=1+t+ \cdots +t^{c-1} \in \Z_{p^{e}}^{\times} $, by Lemma \ref{lemma_geometric_congruences} part \ref{lemma_geometric_congruences_item_2}. Hence we have
\[ (0,h)^{-1} (1,t)^{c} (0,h) = (0,h)^{-1}(h, t')(0,h) = (1,t'), \]
and so $\langle(1,q^{s})\rangle$ and $\langle(1,q^{s'})\rangle$ are conjugate by an element of $ \Z_{p^{e}}^{\times} $.
\end{proof}

\begin{corollary}
There are precisely $ e $ isomorphically distinct bicyclic braces of odd prime power order $ p^{e} $. They correspond bijectively with the subgroups $ \langle (1,q^{p^{k}}) \rangle $ with $ 0 \leq k \leq e-1 $. 
\end{corollary}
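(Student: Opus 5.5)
The plan is to combine the previous corollary with Proposition \ref{prop_iso_odd}, using the dictionary from Section \ref{sec_tools}. A bicyclic brace of order $ p^{e} $ with additive group $ \Z_{p^{e}} $ is the same thing as a cyclic regular subgroup of $ \Hol(\Z_{p^{e}}) $. Two such braces are isomorphic exactly when the subgroups are conjugate by an element of $ \Aut(\Z_{p^{e}}) = \Z_{p^{e}}^{\times} $. Hence the isomorphism classes of bicyclic braces of order $ p^{e} $ are in bijection with the $ \Z_{p^{e}}^{\times} $-conjugacy classes of cyclic regular subgroups.

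By the preceding corollary, each cyclic regular subgroup has a unique generator $ (1,q^{s}) $ with $ 0 < s \leq p^{e-1} $, and every such $ s $ arises. By Proposition \ref{prop_iso_odd}, the subgroups for $ s $ and $ s' $ are conjugate precisely when $ v_{p}(s) = v_{p}(s') $. One subtlety is that $ s = p^{e-1} $ gives $ q^{s} = 1 $, so the valuation should be read as $ \min(v_{p}(s), e-1) $; equivalently, we use $ s $ only modulo $ p^{e-1} $, as that proof did through $ \gcd(s,p^{e-1}) $.

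The conjugacy classes are therefore indexed by the possible values $ k = v_{p}(s) $ for $ 0 < s \leq p^{e-1} $, namely $ k = 0,1,\ldots,e-1 $. Each value is attained by taking $ s = p^{k} $. This gives exactly $ e $ classes, and the subgroups $ \langle (1,q^{p^{k}}) \rangle $ with $ 0 \leq k \leq e-1 $ form a complete, irredundant set of representatives.

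The only point needing care is the boundary case $ k = e-1 $, where $ s = p^{e-1} $ gives the trivial brace $ \langle (1,1) \rangle $. This class is distinct from the others because the projection of the subgroup to $ \Z_{p^{e}}^{\times} $ is trivial only in this case. Otherwise the argument is just the bookkeeping above.
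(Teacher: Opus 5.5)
Your proof is correct and is exactly the argument the paper leaves implicit: combine the preceding corollary (each cyclic regular subgroup has a unique generator $(1,q^{s})$ with $0<s\le p^{e-1}$, and every such $s$ occurs) with Proposition \ref{prop_iso_odd}, observing that $v_{p}(s)$ takes each value $k=0,\ldots,e-1$, attained at $s=p^{k}$. Your extra care over $s=p^{e-1}$ is harmless but unnecessary, since $0<s\le p^{e-1}$ already forces $v_{p}(s)\le e-1$, and Proposition \ref{prop_iso_odd} already separates that class from the others.
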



Finally, we give an explicit representative of each isomorphism class of bicyclic brace of order $ p^{e} $. As described in Section \ref{sec_tools} the map $ \nu : \langle (1,q^{p^{k}}) \rangle \rightarrow \Z_{p^{e}} $ defined by $ \nu(\mu) = \mu[0] $ is a bijection, and the corresponding circle operation on $ \Z_{p^{e}} $ is given by 
\[ i \circ j = \nu ( \nu^{-1}[i] \nu^{-1}[j] ). \]

\begin{proposition} \label{prop_circle_p}
Let $ 0 \leq k \leq e-1 $ and let $ t = q^{p^{k}} $. Then the bicyclic brace corresponding to the subgroup $ \langle (1,t) \rangle $ of $ \Hol(\Z_{p^{e}}) $ is $ (\Z_{p^{e}},+,\circ) $, where
\[ i \circ j = i+j+(t-1)ij. \]
\end{proposition}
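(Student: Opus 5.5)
We compute $\nu^{-1}$ explicitly and then multiply. Throughout we write $S_{c}=1+t+t^{2}+\cdots+t^{c-1}$ for $c\geq 0$, with $S_{0}=0$.

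\emph{Step 1: powers of the generator.} From the computation in the proof of Proposition \ref{prop_p_e_order}, $(1,t)^{c}=(S_{c},t^{c})$. Hence $\nu((1,t)^{c})=S_{c}$. Since $t \neq 1$ (indeed $t \equiv 1 \pmod p$ and $t-1$ is not divisible by $p^{e}$), we may multiply the geometric sum by $t-1$, which gives
\[ (t-1)S_{c}=t^{c}-1 \]
as an identity in $\Z_{p^{e}}$.

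\emph{Step 2: inverting $\nu$.} The subgroup is regular by Proposition \ref{prop_p_e_regular}, so each $i\in\Z_{p^{e}}$ equals $S_{c}$ for a unique $c \bmod p^{e}$, and $\nu^{-1}[i]=(i,t^{c})$. By Step 1 the automorphism component satisfies
\[ t^{c}=1+(t-1)S_{c}=1+(t-1)i. \]
Thus $\nu^{-1}[i]=(i,\,1+(t-1)i)$ for every $i$. The point of this step is that $(t-1)S_{c}$ is a well-defined element of $\Z_{p^{e}}$, computed with $S_{c}$ taken modulo $p^{e}$. So the automorphism part is expressed purely in terms of $i$, and there is no need to worry that $t-1$ is not invertible modulo $p^{e}$.

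\emph{Step 3: the product.} Using $(r,s)(r',s')=(r+sr',ss')$ we compute
\[ \nu^{-1}[i]\,\nu^{-1}[j]=(i+(1+(t-1)i)j,\;\ast). \]
Evaluating at $0$ gives
\[ i\circ j=i+j+(t-1)ij, \]
which is the claimed formula.

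The only delicate point is Step 2, namely justifying that the automorphism component is determined by $i$ as $1+(t-1)i$. This follows directly from the telescoping identity of Step 1, so no real obstacle remains; the rest is a direct computation.
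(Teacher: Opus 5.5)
Your proof is correct and is essentially the paper's argument: you write $i=S_{c}$, use $t^{c}=1+(t-1)S_{c}=1+(t-1)i$, and evaluate $(1,t)^{c}[j]=i+t^{c}j$. Your multiplied-out form is slightly cleaner than the paper's quotient $\frac{t^{i'}-1}{t-1}$, since $t-1$ need not be a unit. One harmless slip: your parenthetical claim that $p^{e}\nmid t-1$ is false for $k=e-1$, where $t=q^{p^{e-1}}=1$ in $\Z_{p^{e}}$; you never use it, though, because $(t-1)S_{c}=t^{c}-1$ holds identically for every $t$.
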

\begin{proof}
Let $ 0 \leq i', j' \leq p^{e}-1 $ be the unique values such that $ (1,t)^{i'}[0]=i $ and $ (1,t)^{j'}[0]=j $. Then in particular $ 1+t+\cdots + t^{i'-1} \equiv \frac{t^{i'}-1}{t-1} \equiv i \pmod{p^{e}} $, and we have
\begin{eqnarray*}
i \circ j & = & (1,t)^{i'}(1,t)^{j'}[0] \\
& = & (1,t)^{i'}[j] \\
& = & i+ t^{i'}j \\
& = & i+((t-1)i+1)j \\
& = & i+j+(t-1)ij. 
\end{eqnarray*}
\end{proof}

Since $ q $ generates the subgroup of units of $ \Z_{p^{e}}^{\times} $ that are congruent to $ 1 $ modulo $ p $ we have $ q^{p^{k}} \equiv 1 \pmod{p^{k+1}} $; hence we obtain

\begin{corollary} \label{cor_circle_p}
The $ e $ isomorphically distinct bicyclic braces of order $ p^{e} $ are represented by $ (\Z_{p^{e}},+,\circ_{k}) $ where $ 0 \leq k \leq e-1 $ and
\[ i \circ_{k} j = i + j + w_{k}ij \mbox{ with } v_{p}(w_{k})=k+1. \]
\end{corollary}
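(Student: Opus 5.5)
This follows by combining the corollary that lists the $e$ isomorphism classes as the subgroups $\langle (1,q^{p^{k}}) \rangle$, $0 \leq k \leq e-1$, with Proposition \ref{prop_circle_p}. That proposition gives the circle operation $i \circ j = i+j+(t-1)ij$ for $t = q^{p^{k}}$, so we set $w_{k} = t-1$. It remains to show that $v_{p}(q^{p^{k}}-1) = k+1$ exactly, not merely that it is at least $k+1$.

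\textbf{Step 1: the base case $v_{p}(q-1)=1$.} Since $q = d^{p-1}$ with $d$ a primitive root modulo $p^{e}$, we have $q \equiv 1 \pmod p$. Moreover $q$ has order $p^{e-1}$ in $\Z_{p^{e}}^{\times}$. Suppose instead that $v_{p}(q-1) = m \geq 2$. Then $q$ lies in the subgroup of units congruent to $1$ modulo $p^{m}$, which has order $p^{e-m} < p^{e-1}$, a contradiction. (For $e=1$ there is only $k=0$, $q = 1$ as an element of $\Z_{p}$; here we should choose the integer representative $q$ so that $v_{p}(q-1)=1$. Any $w_{0}$ with $v_{p}(w_{0}) = 1$ gives $w_{0} \equiv 0$ modulo $p$, which is the trivial brace, so this is consistent.)

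\textbf{Step 2: the exact valuation for $q^{p^{k}}$.} Apply Lemma \ref{lemma_geometric_congruences} part \ref{lemma_geometric_congruences_item_2} with $t=q$, $m=1$ and $j = p^{k}$. This gives $v_{p}(1+q+\cdots+q^{p^{k}-1}) = k$. Multiplying by $q-1$, whose valuation is $1$ by Step 1, we get
\[ v_{p}(q^{p^{k}}-1) = v_{p}(q-1) + v_{p}\bigl(1+q+\cdots+q^{p^{k}-1}\bigr) = 1 + k. \]
Alternatively, the order argument of Step 1 applies directly: $q^{p^{k}}$ has order $p^{e-1-k}$, which forces its valuation to be exactly $k+1$ provided $k+1 \leq e$. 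This holds since $k \leq e-1$.

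\textbf{Step 3: independence of the choice of $w_{k}$.} The brace only depends on $w_{k}$ modulo $p^{e}$. Any two integers of valuation $k+1 \leq e$ differ by a unit factor modulo $p^{e}$. The claim "represented by" is therefore satisfied by the specific choice $w_{k} = q^{p^{k}}-1$.

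The only point needing care is the exactness in Step 1, handled by the order count. Step 3 is optional if the statement is read as "for this particular $w_{k}$." If it is read as "for any such $w_{k}$", then one uses the automorphism $x \mapsto ux$ of $(\Z_{p^{e}},+)$. It transports $x \circ y = x+y+wxy$ to the operation with parameter $u^{-1}w$, so all $w$ of the same valuation give isomorphic braces.
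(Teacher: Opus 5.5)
Your proposal is correct and follows the paper's route. The paper likewise combines the list of subgroups $\langle (1,q^{p^{k}}) \rangle$ with Proposition \ref{prop_circle_p} to get $w_{k}=q^{p^{k}}-1$, justifying the valuation only by noting $q^{p^{k}} \equiv 1 \pmod{p^{k+1}}$ since $q$ generates the units congruent to $1$ modulo $p$; your Steps 2 and 3 (exactness of the valuation via Lemma \ref{lemma_geometric_congruences}, and the rescaling $x \mapsto ux$ showing every $w$ of the same valuation gives an isomorphic brace) make explicit what the paper leaves implicit, and the latter is exactly what is needed later to take $w$ to be a divisor of $n$ in Theorem \ref{theorem_main}.
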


\section{Bicyclic braces of $ 2 $-power order} \label{sec_2_power}

In this section we classify bicyclic braces of $ 2 $-power order. Our approach is essentially the same as that taken in Section \ref{sec_p_power}, with some modifications to account for the fact that the automorphism group of a cyclic $ 2 $-group is in general not cyclic. Once again we are able to employ results developed in the context of Hopf-Galois theory; in this case by Byott \cite{By07}. In particular, from \cite[Proposition 7.1]{By07} we have the following variant of Lemma \ref{lemma_geometric_congruences}:

\begin{lemma} \label{lemma_geometric_congruences_2}
Let $ t \in \Z $ with $ v_{2}(t-1)=m \geq 1 $. 
\begin{enumerate}
\item If $ m \geq 2 $ and $ i \in \N $ then $ 1 + t + t^{2} +\cdots + t^{2^{i}-1} \equiv 2^{i} - 2^{i+m-1} \pmod{2^{m+i}} $. \label{lemma_geometric_congruences_2_item_1}
\item If $ j \in \N $ then 
\[ v_{2}(1+t+t^{2}+\cdots + t^{j-1}) = 
\left\{ \begin{array}{ll} v_{2}(j) + v_{2}(t+1) - 1 & \mbox{ if $ m=1 $ and $ j $ is even} \\
v_{2}(j) & \mbox{ otherwise}. 
\end{array} \right. \]
\label{lemma_geometric_congruences_2_item_2}
\end{enumerate}
\end{lemma}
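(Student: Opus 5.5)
Part \ref{lemma_geometric_congruences_2_item_1} will be proved by induction on $ i $, using the factorization
\[ S_{2^{i+1}}(t) = S_{2^{i}}(t)\,(1+t^{2^{i}}), \]
where $ S_{j}(t) = 1+t+\cdots+t^{j-1} $. The steps, in order:
\begin{itemize}
\item For $ m \geq 2 $, first record that $ v_{2}(t^{2^{i}}-1) = m+i $. Indeed $ t^{2}-1 = (t-1)(t+1) $ and $ v_{2}(t+1)=1 $ when $ m\ge 2 $, so each squaring raises the valuation by exactly one.
\item For $ i=1 $ we have $ S_{2}(t) = 1+t = 2+(t-1) $. Since $ t-1 \equiv 2^{m} \pmod{2^{m+1}} $, this gives $ S_{2}(t) \equiv 2+2^{m} \pmod{2^{m+1}} $.
\item Modulo $ 2^{m+1} $ we have $ 2^{m} \equiv -2^{m} $, so $ S_{2}(t) \equiv 2-2^{m} \pmod{2^{m+1}} $, which is the claim for $ i=1 $.
\item For the inductive step, suppose $ S_{2^{i}}(t) = 2^{i}-2^{i+m-1}+a2^{m+i} $.
\item By the first step, $ 1+t^{2^{i}} = 2+2^{m+i}+b2^{m+i+1} $ for some integer $ b $.
\item Multiplying out, every cross term is divisible by $ 2^{m+i+1} $ except $ 2^{i}\cdot 2^{m+i} $. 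Since $ i \geq 1 $ and $ m\ge 2 $, we have $ 2i+m \geq m+i+1 $, so that term is harmless too.
\item This leaves $ 2^{i+1}-2^{i+m} $ modulo $ 2^{m+i+1} $, as required.
\end{itemize}
This is the same style as Lemma \ref{lemma_geometric_congruences}. The only subtlety is the sign, which appears because $ 2^{m+i} \equiv -2^{m+i} $ at the relevant modulus.

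For part \ref{lemma_geometric_congruences_2_item_2}, write $ j = j'2^{i} $ with $ j' $ odd, and factor
\[ S_{j}(t) = S_{2^{i}}(t)\, S_{j'}(t^{2^{i}}). \]
The second factor is a sum of an odd number of odd terms, so it is odd and contributes nothing to the valuation. Thus $ v_{2}(S_{j}(t)) = v_{2}(S_{2^{i}}(t)) $, and it remains to compute this.
\begin{itemize}
\item If $ i=0 $ the valuation is $ 0 = v_{2}(j) $, covering odd $ j $ in all cases.
\item If $ m\geq 2 $, part \ref{lemma_geometric_congruences_2_item_1} gives $ S_{2^{i}}(t) \equiv 2^{i}(1-2^{m-1}) \pmod{2^{m+i}} $. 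Here $ 1-2^{m-1} $ is odd and $ m+i>i $, so the valuation is $ i = v_{2}(j) $.
\item If $ m=1 $ and $ i\ge 1 $, use $ S_{2^{i}}(t) = (1+t)\,S_{2^{i-1}}(t^{2}) $. Set $ t_{1}=t^{2} $. Then $ v_{2}(t_{1}-1) = v_{2}(t-1)+v_{2}(t+1) = 1+v_{2}(t+1) \geq 3 $, since $ t\equiv 3 \pmod 4 $ forces $ v_{2}(t+1)\geq 2 $.
\item Applying the $ m\geq 2 $ case to $ t_{1} $ gives $ v_{2}(S_{2^{i-1}}(t_{1})) = i-1 $.
\item Hence $ v_{2}(S_{j}(t)) = v_{2}(t+1)+i-1 $, which is the stated formula.
\end{itemize}

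The main obstacle I expect is the bookkeeping in the inductive step of part \ref{lemma_geometric_congruences_2_item_1}. I need to confirm that no cross term survives modulo $ 2^{m+i+1} $, and the base case $ m=2 $, $ i=1 $ (where $ 2i+m = m+i+1 $ exactly) is where this is tightest. I would check that case directly with $ t=5 $: $ 1+5 = 6 \equiv 2-4 \pmod 8 $, as required.
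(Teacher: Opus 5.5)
Your proof is correct. It necessarily takes a different route from the paper, because the paper gives no proof of its own here: it quotes the lemma from Byott \cite[Proposition 7.1]{By07}.

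Your part (ii) uses the same factorization $S_{j'2^{i}}(t)=S_{2^{i}}(t)\,S_{j'}(t^{2^{i}})$ as the paper's proof of Lemma \ref{lemma_geometric_congruences}(ii). Your extra step $S_{2^{i}}(t)=(1+t)\,S_{2^{i-1}}(t^{2})$ for $m=1$ is exactly where the correction $v_{2}(t+1)-1$ comes from; it has no odd-prime analogue.

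For part (i) you induct on $i$ via $v_{2}(t^{2^{i}}-1)=m+i$. Alternatively, the paper's argument for Lemma \ref{lemma_geometric_congruences}(i) carries over when $m\geq 2$. There the units congruent to $1$ modulo $2^{m}$ still form a cyclic subgroup of $\Z_{2^{m+i}}^{\times}$ of order $2^{i}$, generated by $t$. Summing its elements gives $\sum_{k=0}^{2^{i}-1}(1+k2^{m})=2^{i}+(2^{i}-1)2^{m+i-1}\equiv 2^{i}-2^{m+i-1}\pmod{2^{m+i}}$ for $i\geq 1$. That version is shorter, and it shows the sign arises because the factor $\tfrac12$ in the triangular number can no longer be absorbed when $p=2$. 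Yours is more hands-on but equally complete and self-contained.

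Two small remarks:
\begin{enumerate}
\item You are right to begin at $i=1$. For $i=0$ the congruence in (i) would read $1\equiv 1-2^{m-1}\pmod{2^{m}}$, which is false, so your separate treatment of odd $j$ in (ii) is genuinely needed.
\item In the degenerate case $t=-1$, your $t_{1}=1$ has no finite $v_{2}(t_{1}-1)$, so the $m\geq 2$ case does not literally apply. However, $S_{2^{i-1}}(1)=2^{i-1}$ gives the formula directly under the convention $v_{2}(0)=\infty$.
\end{enumerate}
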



Now as in Section \ref{sec_p_power} we consider the group $ \Z_{2^{e}} $. If $ e=1 $ or $ e=2 $ then up to isomorphism there is a unique bicyclic brace of order $ 2^{e} $ (see \cite[Proposition 2.4]{Ba15}); this is $ (\Z_{2^{e}},+,+) $, and of course is biskew. We therefore assume that $ e \geq 3 $ and identify $ \Aut(\Z_{2^{e}}) $ with $ \Z_{2^{e}}^{\times} = \langle -1 \rangle \times \langle 5 \rangle $; hence $ \Hol(\Z_{2^{e}}) \cong \Z_{2^{e}} \rtimes \Z_{2^{e}}^{\times} $. As before we begin by identifying the elements of $ \Hol(\Z_{2^{e}}) $ of order $ 2^{e} $. 

\begin{proposition} \label{prop_2_e_order} 
The elements of $ \Hol(\Z_{2^{e}}) $ of order $ 2^{e} $ are precisely those of the form $ (r,5^{s}) $ with $ 0 < s \leq 2^{e-2} $ and $ r $ odd. 
\end{proposition}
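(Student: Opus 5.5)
The plan is to copy the proof of Proposition \ref{prop_p_e_order}, using Lemma \ref{lemma_geometric_congruences_2} in place of Lemma \ref{lemma_geometric_congruences}. The new feature is the extra case in which the automorphism component involves $-1$.

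Write a general element of $ \Hol(\Z_{2^{e}}) $ as $ (r,t) $ with $ t \in \Z_{2^{e}}^{\times} = \langle -1 \rangle \times \langle 5 \rangle $. As before,
\[ (r,t)^{c} = (r(1+t+\cdots+t^{c-1}),\, t^{c}). \]
Every element of $ \Z_{2^{e}}^{\times} $ has order dividing $ 2^{e-2} $, so $ t^{c}=1 $ whenever $ 2^{e-2} \mid c $. Hence the order of $ (r,t) $ always divides $ 2^{e} $, and it equals $ 2^{e} $ if and only if $ (r,t)^{2^{e-1}} \neq (0,1) $. Since $ t^{2^{e-1}}=1 $, this reduces to deciding whether $ 2^{e} \mid r(1+t+\cdots+t^{2^{e-1}-1}) $.

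We fix an integer representative of $ t $ and apply Lemma \ref{lemma_geometric_congruences_2} part \ref{lemma_geometric_congruences_2_item_2} with $ j = 2^{e-1} $. There are two cases.

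\emph{Case $ t = 5^{s} $.} Here $ t \equiv 1 \pmod 4 $. If $ t=1 $ the sum equals $ j $, so its $ 2 $-valuation is $ e-1 $ directly. Otherwise $ m = v_{2}(t-1) \geq 2 $, and the lemma again gives valuation $ v_{2}(j) = e-1 $.
\begin{itemize}
\item If $ r $ is odd, then $ v_{2}\bigl(r(1+t+\cdots+t^{2^{e-1}-1})\bigr) = e-1 < e $, so the element has order exactly $ 2^{e} $.
\item If $ r $ is even, the valuation is at least $ e $, so the first component vanishes at $ c = 2^{e-1} $ and the order is at most $ 2^{e-1} $.
\end{itemize}

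\emph{Case $ t = -5^{s} $.} Here $ t \equiv 3 \pmod 4 $, so $ m = v_{2}(t-1) = 1 $ and $ v_{2}(t+1) \geq 2 $. Since $ j = 2^{e-1} $ is even, the first alternative of part \ref{lemma_geometric_congruences_2_item_2} applies, giving valuation
\[ (e-1) + v_{2}(t+1) - 1 \geq e. \]
Thus $ (r,t)^{2^{e-1}} = (0,1) $ for every $ r $, and no such element has order $ 2^{e} $. The only delicate point is making this valuation argument legitimate in $ \Z $ rather than in $ \Z_{2^{e}} $. Any integer representative of $ t $ satisfies $ t \equiv 3 \pmod 4 $, so $ v_{2}(t+1) \geq 2 $ holds for it, and we only need a lower bound of $ e $ for the valuation. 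Alternatively, $ t = -1 $ can be treated directly, since then the sum of an even number of terms is $ 0 $.

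Combining the two cases, the elements of order $ 2^{e} $ are exactly the $ (r,5^{s}) $ with $ r $ odd. Writing the exponent in the range $ 0 < s \leq 2^{e-2} $, since $ 5 $ has order $ 2^{e-2} $, gives the statement.
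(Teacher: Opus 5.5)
Your case split on $t\equiv 1$ versus $t\equiv 3 \pmod 4$ and your use of Lemma~\ref{lemma_geometric_congruences_2} part~\ref{lemma_geometric_congruences_2_item_2} match the paper's proof. Your treatment of the $t\equiv 3\pmod 4$ case is, if anything, more careful than the paper's: you get valuation $\geq e$ rather than $=e$, and you handle the representatives $t=\pm 1$.

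\textbf{The gap is in your reduction step.} You argue that every element of $\Z_{2^e}^\times$ has order dividing $2^{e-2}$, ``hence'' the order of $(r,t)$ always divides $2^{e}$. This does not follow. Knowing $t^{c}=1$ whenever $2^{e-2}\mid c$ only says that $(r,t)^{2^{e-2}}$ is a translation $(x,1)$. That bounds the order of $(r,t)$ by $2^{e-2}\cdot 2^{e}$, not by $2^{e}$. Whether $(r,t)^{2^e}=(0,1)$ depends on the first coordinate $r(1+t+\cdots+t^{2^e-1})$, which you never examine.

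This upper bound is exactly what you need in the one case where you claim the order is exactly $2^{e}$, namely $t=5^{s}$ with $r$ odd. There, showing $(r,t)^{2^{e-1}}\neq(0,1)$ only gives order at least $2^{e}$.

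Any of the following fixes it:
\begin{enumerate}
\item Apply the same lemma with $j=2^{e}$. This gives $v_{2}(1+t+\cdots+t^{2^e-1})\geq e$, with equality when $t\equiv 1\pmod 4$, so $(r,t)^{2^e}=(0,1)$.
\item Note that $\Hol(\Z_{2^e})$ is a $2$-group acting faithfully on $2^{e}$ points. Every cycle of $(r,t)$ then has $2$-power length at most $2^{e}$.
\item Do as the paper does and use part~\ref{lemma_geometric_congruences_2_item_2} for every $c$, not just $c=2^{e-1}$. When $t\equiv 1\pmod 4$ and $r$ is odd, $v_{2}\bigl(r(1+t+\cdots+t^{c-1})\bigr)=v_{2}(c)$. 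So the least positive $c$ that kills the first coordinate is exactly $2^{e}$, and $t^{2^e}=1$, which yields the exact order in one stroke.
\end{enumerate}
With any of these added, your argument is complete.
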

\begin{proof}
Let $ (r,t) \in \Hol(\Z_{p^{e}}) $; then
\[ (r,t)^{c} = (r(1+t+t^{2}+\cdots + t^{c-1}),t^{c}). \]

If $ t \equiv -1 \pmod{4} $ then by Lemma \ref{lemma_geometric_congruences_2} part \ref{lemma_geometric_congruences_2_item_2} we see that $ v_{2}(1+t+t^{2}+\cdots + t^{2^{e-1}-1})=e $; since $ t $ has order at most $ 2^{e-2} $ we see that $ (r,t)^{2^{e-1}} = (0,1) $, so $ (r,t) $ does not have order $ 2^{e} $. 

If $ t \equiv 1 \pmod{4} $, so that $ t \equiv 5^{s} $ for some $ s $, then $ v_{2}(1+t+t^{2}+\cdots + t^{c-1})=v_{2}(c) $. Now if $ r $ is odd then the minimal positive $ c $ such that $ 2^{e} \mid r(1+t+t^{2}+\cdots + t^{c-1}) $ is $ c=2^{e} $, and so $ (r,t) $ has order $ 2^{e} $. On the other hand if $ r $ is even then $ 2^{e} \mid r(1+t+t^{2}+\cdots + t^{2^{e-1}-1}) $, so $ (r,t) $ has order at most $ 2^{e-1} $.
\end{proof}


\begin{proposition} \label{prop_2_e_regular} 
Every element of $ \Hol(\Z_{2^{e}}) $ of order $ 2^{e} $ generates a regular subgroup of $ \Hol(\Z_{2^{e}}) $. 
\end{proposition}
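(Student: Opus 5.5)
We mirror the proof of Proposition \ref{prop_p_e_regular}. Let $ (r,t) \in \Hol(\Z_{2^{e}}) $ have order $ 2^{e} $. By Proposition \ref{prop_2_e_order} we may write $ t = 5^{s} $ with $ 0 < s \leq 2^{e-2} $ and $ r $ odd. The subgroup $ \langle (r,t) \rangle $ has order $ 2^{e} = |\Z_{2^{e}}| $. It therefore suffices to show that the map $ \nu : \langle (r,t) \rangle \rightarrow \Z_{2^{e}} $, $ \mu \mapsto \mu[0] $, is injective. Equivalently, we show that the only power of $ (r,t) $ fixing $ 0 $ is the identity; if $ (r,t)^{c}[0] = (r,t)^{c'}[0] $, applying $ (r,t)^{-c'} $ reduces to this case.

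So suppose $ (r,t)^{c}[0]=0 $ for some $ 0 \leq c \leq 2^{e}-1 $. As computed in the proof of Proposition \ref{prop_2_e_order},
\[ (r,t)^{c}[0] = r(1+t+t^{2}+\cdots+t^{c-1}), \]
so $ 2^{e} \mid r(1+t+\cdots+t^{c-1}) $. Since $ r $ is odd, $ 2^{e} \mid 1+t+\cdots+t^{c-1} $.

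Now $ t = 5^{s} \equiv 1 \pmod 4 $, so $ v_{2}(t-1) = m \geq 2 $. Hence the exceptional case $ m=1 $ of Lemma \ref{lemma_geometric_congruences_2} part \ref{lemma_geometric_congruences_2_item_2} does not apply, and for $ c \geq 1 $ we get $ v_{2}(1+t+\cdots+t^{c-1}) = v_{2}(c) $. If $ c \geq 1 $ this would force $ v_{2}(c) \geq e $, contradicting $ 0 < c < 2^{e} $. Therefore $ c = 0 $, so $ (r,t)^{c} $ is the identity, and the subgroup is regular.

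The only point needing care is excluding the case $ m=1 $ of Lemma \ref{lemma_geometric_congruences_2}, where the valuation of the geometric sum can jump. This is handled by Proposition \ref{prop_2_e_order}, which guarantees $ t \equiv 1 \pmod 4 $ for any element of order $ 2^{e} $.
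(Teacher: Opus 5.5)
Your proof is correct and takes the same route as the paper, whose proof just says the argument is essentially that of Proposition \ref{prop_p_e_regular}. You also supply the detail the paper leaves implicit: Proposition \ref{prop_2_e_order} forces $ t \equiv 1 \pmod{4} $, so the exceptional case $ m=1 $ of Lemma \ref{lemma_geometric_congruences_2} part \ref{lemma_geometric_congruences_2_item_2} never arises.
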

\begin{proof}
This is essentially the same as the proof of Proposition \ref{prop_p_e_regular}. 
\end{proof}

\begin{corollary}
Each cyclic regular subgroup of $ \Hol(\Z_{2^{e}}) $ has a unique generator of the form $ (1,5^{s}) $ with $ 0 < s \leq 2^{e-2} $ . There are precisely $ 2^{e-2} $ such subgroups. 
\end{corollary}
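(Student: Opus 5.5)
The plan is to follow the odd prime case (the corollary after Proposition \ref{prop_p_e_regular}), using Propositions \ref{prop_2_e_order} and \ref{prop_2_e_regular} in place of their odd counterparts. There are three steps: every cyclic regular subgroup contains an element of the form $ (1,5^{s}) $; that element generates the subgroup; and counting the admissible $ s $ gives $ 2^{e-2} $.

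\emph{Existence.} Let $ M $ be a cyclic regular subgroup of $ \Hol(\Z_{2^{e}}) $. Regularity gives $ |M| = 2^{e} $, so $ M = \langle \mu \rangle $ for some $ \mu $ of order $ 2^{e} $. By Proposition \ref{prop_2_e_order} we may write $ \mu = (r,5^{s}) $ with $ r $ odd and $ 0 < s \leq 2^{e-2} $. Regularity means the map $ \nu : M \rightarrow \Z_{2^{e}} $, $ \nu(\mu')=\mu'[0] $, is a bijection. Hence there is a unique element $ \mu' \in M $ with $ \mu'[0]=1 $.

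\emph{Form and generation.} For any $ (a,b) \in \Hol(\Z_{2^{e}}) $ we have $ (a,b)[0] = a $, so $ \mu' = (1,b) $ for some $ b \in \Z_{2^{e}}^{\times} $. Now $ \mu' = \mu^{c} $ for some $ c $, so $ b = 5^{sc} $ is a power of $ 5 $. We can therefore write $ \mu' = (1,5^{s'}) $ with $ 0 < s' \leq 2^{e-2} $, using that $ 5 $ has order $ 2^{e-2} $ modulo $ 2^{e} $ when $ e \geq 3 $. Since $ 1 $ is odd, Proposition \ref{prop_2_e_order} shows that $ \mu' $ has order $ 2^{e} $, so $ \mu' $ generates $ M $. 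Uniqueness follows from injectivity of $ \nu $: any two generators of the form $ (1,5^{s}) $ both send $ 0 $ to $ 1 $. Uniqueness of the exponent holds because $ s $ is determined modulo $ 2^{e-2} $ by the unit $ 5^{s} $.

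\emph{Counting.} Conversely, for each $ s $ with $ 0 < s \leq 2^{e-2} $, the element $ (1,5^{s}) $ has order $ 2^{e} $ by Proposition \ref{prop_2_e_order}. By Proposition \ref{prop_2_e_regular} it generates a cyclic regular subgroup. By the uniqueness just shown, distinct $ s $ give distinct subgroups, because each subgroup has only one generator of this form. This yields exactly $ 2^{e-2} $ such subgroups.

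The only step needing care is confirming that the element sending $ 0 $ to $ 1 $ has second coordinate a power of $ 5 $ rather than $ -5^{s} $. This is handled by noting it is a power of $ \mu $, whose second coordinate already lies in $ \langle 5 \rangle $.
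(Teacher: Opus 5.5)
Your proof is correct and follows essentially the same route as the paper. The paper gives no separate proof of this corollary, so it implicitly reuses the argument of the odd-prime corollary: regularity yields a unique element sending $0$ to $1$, this element is $(1,5^{s})$ and generates the subgroup by Proposition~\ref{prop_2_e_order}, and the count follows because $5$ has order $2^{e-2}$. Your remark that the second coordinate lies in $\langle 5 \rangle$ because this element is a power of a generator $(r,5^{s})$ makes explicit a step the paper leaves unstated.
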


Next we determine when two cyclic regular subgroups yield isomorphic braces. 

\begin{proposition}
Two regular cyclic subgroups $ \langle (1,5^{s}) \rangle $ and $ \langle (1,5^{s'}) \rangle $ of $ \Hol(\Z_{2^{e}}) $ are conjugate by an element of $ \Z_{2^{e}}^{\times} $ if and only if $ v_{2}(s) = v_{2}(s') $. 
\end{proposition}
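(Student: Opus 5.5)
The plan is to mirror the proof of Proposition \ref{prop_iso_odd}, with the extra automorphism $ -1 $ in $ \Z_{2^{e}}^{\times} = \langle -1 \rangle \times \langle 5 \rangle $ handled by the projection argument.

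For the forward direction, suppose $ \langle (1,5^{s}) \rangle = (0,h) \langle (1,5^{s'}) \rangle (0,h)^{-1} $ for some $ h \in \Z_{2^{e}}^{\times} $. Project onto the second component, which is a homomorphism onto the abelian group $ \Z_{2^{e}}^{\times} $. This gives $ \langle 5^{s} \rangle = h\langle 5^{s'} \rangle h^{-1} = \langle 5^{s'} \rangle $. Since $ 5 $ has order $ 2^{e-2} $ in $ \Z_{2^{e}}^{\times} $, we get $ \gcd(s,2^{e-2}) = \gcd(s',2^{e-2}) $. Because $ 0 < s,s' \leq 2^{e-2} $, this forces $ v_{2}(s) = v_{2}(s') $. (When $ s = 2^{e-2} $ the valuation is $ e-2 $, and it is the only value in range with that valuation, so the convention is consistent.)

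For the converse, suppose $ v_{2}(s) = v_{2}(s') $, and choose an odd $ c $ with $ s' \equiv cs \pmod{2^{e-2}} $. Put $ t = 5^{s} $, so $ t \equiv 1 \pmod 4 $ and $ v_{2}(t-1) \geq 2 $. Then
\[ (1,t)^{c} = (1+t+\cdots+t^{c-1},\, t^{c}) = (h, 5^{s'}), \]
where $ h = 1+t+\cdots+t^{c-1} $. By Lemma \ref{lemma_geometric_congruences_2} part \ref{lemma_geometric_congruences_2_item_2}, in the case $ m \geq 2 $, we have $ v_{2}(h) = v_{2}(c) = 0 $, so $ h $ is a unit. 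Alternatively, $ h $ is a sum of an odd number of odd terms, hence odd. The same computation as before then gives
\[ (0,h)^{-1}(h,t')(0,h) = (1,t'), \qquad t' = 5^{s'}, \]
because $ (0,h)^{-1}(h,t') = (1, h^{-1}t') $ and multiplying on the right by $ (0,h) $ gives $ (1,t') $, the multiplicative group being abelian. Since $ c $ is odd, $ (1,t)^{c} $ generates $ \langle (1,t) \rangle $. Therefore $ (0,h)^{-1}\langle(1,5^{s})\rangle(0,h) = \langle (1,5^{s'}) \rangle $.

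No real obstacle is expected. The only points needing care are that $ c $ may be taken odd, which holds because $ s/2^{v} $ and $ s'/2^{v} $ are both odd, and that the geometric sum is a unit, which follows from oddness.
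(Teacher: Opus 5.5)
Your proposal is correct and follows the paper's proof essentially step for step. The forward direction projects onto the abelian group $\Z_{2^{e}}^{\times}$ to get $\langle 5^{s}\rangle=\langle 5^{s'}\rangle$; the converse picks an odd $c$ with $s'\equiv cs \pmod{2^{e-2}}$, uses Lemma \ref{lemma_geometric_congruences_2} part (ii) to show $h=1+t+\cdots+t^{c-1}$ is a unit, and conjugates $(1,t)^{c}=(h,t')$ by $(0,h)$. Your extra remarks (why $c$ can be taken odd, why $(1,t)^{c}$ still generates $\langle(1,t)\rangle$, and the direct parity argument that $h$ is odd) make explicit details the paper leaves implicit.
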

\begin{proof}
Although $ \Z_{2^{e}}^{\times} $ is not cyclic, it is certainly abelian; hence the argument employed to prove the first half of  Proposition \ref{prop_iso_odd} shows that if $ \langle (1,5^{s}) \rangle $ and $ \langle (1,5^{s'}) \rangle $ are conjugate by an element of $ \Z_{2^{e}}^{\times} $ then $ v_{2}(s) = v_{2}(s') $. 

For the converse, note that if $ v_{2}(s) = v_{2}(s') $ then $ s' \equiv cs \pmod{2^{e-2}} $ for some odd $ c $; hence, writing $ t=5^{s} $, we have $ v_{2}(1+t+t^{2}+ \cdots +t^{c-1}) = v_{2}(c)= 0 $ by Lemma \ref{lemma_geometric_congruences_2} part (ii). Now the argument employed to prove the second half of Proposition \ref{prop_iso_odd} applies. 
\end{proof}

\begin{corollary}
There are precisely $ e-1 $ isomorphically distinct bicyclic braces of order $ 2^{e} $. 
\end{corollary}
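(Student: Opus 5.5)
The plan is to count the conjugacy classes determined by the preceding proposition. By the Corollary on unique generators, the cyclic regular subgroups of $ \Hol(\Z_{2^{e}}) $ are exactly the subgroups $ \langle (1,5^{s}) \rangle $ with $ 0 < s \leq 2^{e-2} $. Distinct values of $ s $ give distinct subgroups, since each subgroup has a unique generator of this form. By the results summarized in Section \ref{sec_tools}, two such subgroups yield isomorphic bicyclic braces if and only if they are conjugate by an element of $ \Z_{2^{e}}^{\times} $. By the preceding proposition, this happens if and only if $ v_{2}(s) = v_{2}(s') $. So the isomorphism classes of bicyclic braces of order $ 2^{e} $ are in bijection with the set of values $ v_{2}(s) $ that occur as $ s $ ranges over $ 0 < s \leq 2^{e-2} $.

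It remains to count these values. For $ 0 < s \leq 2^{e-2} $ the valuation $ v_{2}(s) $ can be any $ k $ with $ 0 \leq k \leq e-2 $, and each such $ k $ is attained by $ s = 2^{k} $. No other value is possible, since $ s \leq 2^{e-2} $ forces $ v_{2}(s) \leq e-2 $. Hence there are exactly $ e-1 $ classes, represented by the subgroups $ \langle (1,5^{2^{k}}) \rangle $ with $ 0 \leq k \leq e-2 $.

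The one point to check is that the representative $ s = 2^{e-2} $, which gives $ 5^{s} \equiv 1 $ and hence the trivial brace, is handled consistently with the rest. Since $ 5 $ has order $ 2^{e-2} $, the exponent $ s $ is really only defined modulo $ 2^{e-2} $. The preceding proposition's condition "$ v_{2}(s)=v_{2}(s') $" must therefore be read with $ s $ in the range $ 0 < s \leq 2^{e-2} $, as in the Corollary. With that reading, $ s=2^{e-2} $ is the unique element of its valuation class, which is consistent because the trivial subgroup $ \langle (1,1) \rangle $ is conjugation-invariant. Apart from this normalization of $ s $, the argument is pure bookkeeping.
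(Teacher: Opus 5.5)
Your argument is correct and matches the paper, which gives no separate proof: the corollary follows by combining the preceding corollary (the $2^{e-2}$ subgroups $\langle (1,5^{s})\rangle$ with $0<s\leq 2^{e-2}$) with the proposition that conjugacy holds exactly when $v_{2}(s)=v_{2}(s')$, and the attainable valuations are $0,\ldots,e-2$. Your remark about reading $s$ in the range $0<s\leq 2^{e-2}$, so that $s=2^{e-2}$ (the subgroup $\langle (1,1)\rangle=\lambda_{+}(\Z_{2^{e}})$) forms its own class, is a sensible clarification; like the paper, you are working under the section's standing assumption $e\geq 3$.
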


Finally, we give an explicit representative of each isomorphism class of bicyclic brace of order $ 2^{e} $. 

\begin{proposition} \label{prop_circle_2}
Let $ 0 \leq k \leq e-2 $. Let $ t = 5^{2^{k}} $. Then the bicyclic brace corresponding to the subgroup $ \langle (1,t) \rangle $ of $ \Hol(\Z_{2^{e}}) $ is $ (\Z_{2^{e}},+,\circ) $, where
\[ i \circ j = i+j+(t-1)ij. \]
\end{proposition}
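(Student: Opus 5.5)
The plan is to mirror the proof of Proposition \ref{prop_circle_p} essentially verbatim, since the only place the prime entered there was the identification of $\nu$ and the geometric-sum identity. First I note that $(1,t)$ with $t = 5^{2^{k}}$ and $0 \leq k \leq e-2$ is of the form $(1,5^{s})$ with $0 < s \leq 2^{e-2}$. So by Proposition \ref{prop_2_e_order} it has order $2^{e}$, and by Proposition \ref{prop_2_e_regular} it generates a regular subgroup of $\Hol(\Z_{2^{e}})$. Hence the map $\nu : \langle (1,t) \rangle \rightarrow \Z_{2^{e}}$, $\nu(\mu) = \mu[0]$, is a bijection, and the circle operation is $i \circ j = \nu(\nu^{-1}[i]\nu^{-1}[j])$.

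Next, for $i,j \in \Z_{2^{e}}$, I take the unique exponents $0 \leq i',j' \leq 2^{e}-1$ with $(1,t)^{i'}[0] = i$ and $(1,t)^{j'}[0] = j$. The power formula $(1,t)^{c} = (1+t+\cdots+t^{c-1}, t^{c})$ gives $i \equiv 1+t+\cdots+t^{i'-1} \pmod{2^{e}}$. Multiplying this by $t-1$ telescopes, so
\[ (t-1)i \equiv t^{i'}-1 \pmod{2^{e}}, \]
and therefore $t^{i'} \equiv (t-1)i + 1$. This step avoids dividing by $t-1$, which is even and hence not invertible modulo $2^{e}$. The same remark applies to the odd case, where the paper wrote the quotient only as shorthand.

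Then I compute directly:
\[ i \circ j = (1,t)^{i'}(1,t)^{j'}[0] = (1,t)^{i'}[j] = i + t^{i'} j = i + ((t-1)i+1)j = i+j+(t-1)ij. \]
Here the first equality uses that $\nu^{-1}[i]\nu^{-1}[j] = (1,t)^{i'}(1,t)^{j'}$. The third uses the action $(r,u)[x] = r + ux$ of $\Hol(\Z_{2^{e}})$ on $\Z_{2^{e}}$, with $(1,t)^{i'} = (i, t^{i'})$.

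There is no real obstacle. The only points needing care are confirming that the chosen generator falls within the range covered by Propositions \ref{prop_2_e_order} and \ref{prop_2_e_regular}, and replacing the formal quotient $\frac{t^{i'}-1}{t-1}$ by the multiplied-out congruence above.
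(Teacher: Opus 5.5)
Your proposal is correct and takes the same route as the paper, which proves this by deferring to the proof of Proposition \ref{prop_circle_p}, i.e.\ exactly your computation $i \circ j = (1,t)^{i'}[j] = i + t^{i'}j = i+j+(t-1)ij$. Your multiplied-out congruence $(t-1)i \equiv t^{i'}-1 \pmod{2^{e}}$ is a sound rendering of the step the paper writes via $\frac{t^{i'}-1}{t-1}$. Your check that $s = 2^{k}$ satisfies $0 < s \leq 2^{e-2}$ is the right justification for invoking Propositions \ref{prop_2_e_order} and \ref{prop_2_e_regular}.
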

\begin{proof}
This is essentially the same as the proof of Proposition \ref{prop_circle_p}.
\end{proof}

Since the element $ 5 $ generates the group of units of $ \Z_{2^{e}}^{\times} $ that are congruent to $ 1 $ modulo $ 4 $ we obtain the following corollary. We take this opportunity to integrate the cases $ e=1 $ and $ e=2 $ into our discussion. 

\begin{corollary} \label{cor_circle_2}
There is a unique bicyclic brace of order $ 2 $, which is represented by $ (\Z_{2},+,+) $. If $ e \geq 2 $ then the $ e-1 $ isomorphically distinct bicyclic braces of order $ 2^{e} $ are represented by $ (\Z_{2^{e}},+,\circ_{k}) $ where $ 0 \leq k \leq e-2 $ and 
\[ i \circ_{k} j = i + j + w_{k}ij \mbox{ with } v_{2}(w_{k}) = k+2. \]
\end{corollary}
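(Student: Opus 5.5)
The plan is to treat the case $ e=1 $ separately and then use the preceding results for $ e \geq 2 $.

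For $ e=1 $, $ \Hol(\Z_{2}) = \lambda_{+}(\Z_{2}) $, since $ \Aut(\Z_{2}) $ is trivial. So the only regular subgroup is $ \lambda_{+}(\Z_{2}) $, and the only brace is $ (\Z_{2},+,+) $.

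For $ e=2 $, I will cite \cite[Proposition 2.4]{Ba15}: the unique bicyclic brace of order $ 4 $ is $ (\Z_{4},+,+) $. Take $ k=0 $ and $ w_{0}=4 $, so $ v_{2}(w_{0})=2 $. Then $ w_{0}ij \equiv 0 \pmod{4} $, hence $ i\circ_{0} j = i+j $, and the formula recovers the trivial brace. This also shows that $ e-1=1 $ brace is represented, as claimed.

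For $ e \geq 3 $, the corollary preceding Proposition \ref{prop_circle_2} gives exactly $ e-1 $ isomorphism classes. They correspond to $ \langle (1,5^{2^{k}}) \rangle $ with $ 0\le k\le e-2 $, since $ v_{2}(s) $ ranges over $ 0,\dots,e-2 $ for $ 0<s\le 2^{e-2} $. Proposition \ref{prop_circle_2} then realizes each class as $ i\circ j = i+j+(t-1)ij $ with $ t=5^{2^{k}} $. It remains to compute $ v_{2}(t-1) $. The element $ 5 $ generates the subgroup $ \{u \equiv 1 \pmod 4\} $ of $ \Z_{2^{e}}^{\times} $, which has order $ 2^{e-2} $. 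So $ 5^{2^{k}} $ generates the subgroup of index $ 2^{k} $, namely $ \{u\equiv 1 \pmod{2^{k+2}}\} $. Its generator satisfies $ 5^{2^{k}} \not\equiv 1 \pmod{2^{k+3}} $ whenever $ k+3\le e $. Alternatively, Lemma \ref{lemma_geometric_congruences_2} part \ref{lemma_geometric_congruences_2_item_1} gives $ v_{2}(5^{2^{k}}-1)=k+2 $ directly as an integer identity. Hence $ w_{k}=t-1 $ has $ v_{2}(w_{k})=k+2 $.

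Finally, the formula depends only on $ v_{2}(w_{k}) $. Suppose $ w, w' $ both have valuation $ k+2\le e $. By the same conjugation argument as in Proposition \ref{prop_iso_odd}, the map $ x\mapsto ux $, with $ u $ the odd unit satisfying $ w'=uw $, is an isomorphism. I check this directly: $ u(x\circ_{w'} y)\cdot u^{-1} $ becomes $ ux + uy + w(ux)(uy) $ after substitution, so multiplication by $ u $ intertwines the two operations. Thus any $ w_{k} $ of the stated valuation represents the class. The main care point is the bookkeeping of the valuation of $ 5^{2^{k}}-1 $, handled as above.
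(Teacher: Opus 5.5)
Your proposal is correct and follows the paper's route. For $e\ge 3$ you combine Proposition \ref{prop_circle_2} with the fact that $5$ generates the units $\equiv 1 \pmod 4$, and you treat $e=1,2$ separately (via \cite{Ba15} for $e=2$, as the paper does); your explicit check that $x\mapsto ux$ carries $\circ_{uw}$ to $\circ_{w}$, which reads more cleanly as $u(x\circ_{uw}y)=(ux)\circ_{w}(uy)$ without the stray $u^{-1}$, usefully makes rigorous a point the paper leaves implicit: any $w_k$ with $v_2(w_k)=k+2$ represents the class.
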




\section{Bicyclic braces of arbitrary order and biskew braces} \label{sec_combining}

In this section we complete the proof of Theorem \ref{theorem_main} by using Lemma \ref{lemma_decomposition} to combine the results of Sections \ref{sec_p_power} and \ref{sec_2_power} to classify bicyclic braces of arbitrary order and using the theory discussed in Section \ref{sec_tools} to identify which of these are biskew. We conclude with some familes of examples. 

\begin{proposition} \label{prop_circle_general}
Let $ n \in \N $. The isomorphically distinct bicyclic braces of order $ n $ may be realized as $ (\Z_{n},+,\circ_{w}) $ where
\begin{equation} \label{eqn_circle}
i \circ_{w} j = i + j + wij
\end{equation}
and $ w $ is a positive divisor of $ n $ such that $ p \mid w $ for all $ p \mid n $ and $ 4 \mid w $ if $ 4 \mid n $. 
\end{proposition}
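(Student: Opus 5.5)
The plan is to combine Lemma \ref{lemma_decomposition} with the prime-power classifications in Corollaries \ref{cor_circle_p} and \ref{cor_circle_2}, and to use the Chinese remainder theorem to glue the prime-power circle operations into a single operation of the form \eqref{eqn_circle}.

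Write $ n = \prod_{i} p_{i}^{e_{i}} $. The first step is to show that every bicyclic brace $ B $ of order $ n $ is of this form for some admissible $ w $. By Lemma \ref{lemma_decomposition}, $ B $ is isomorphic to the direct product of its Sylow ideals $ P_{i} $. Each $ P_{i} $ is bicyclic: its additive group is the Sylow subgroup of a cyclic group, and $ (P_{i},\circ) $ is a subgroup of the cyclic group $ (B,\circ) $. By Corollary \ref{cor_circle_p} or Corollary \ref{cor_circle_2}, $ P_{i} \cong (\Z_{p_{i}^{e_{i}}},+,\circ_{w_{i}}) $ with $ i\circ j = i+j+w_{i}ij $. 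Here $ v_{p_{i}}(w_{i}) = k+1 $ with $ 1 \leq k+1 \leq e_{i} $ for $ p_{i} $ odd. For $ p_{i}=2 $ we have $ v_{2}(w_{i}) = k+2 \in \{2,\dots,e_{i}\} $ when $ e_{i} \geq 2 $, and $ w_{i} \equiv 0 $ (so $ v_{2} $ may be taken to be $ 1 $ modulo $ 2 $) when $ e_{i}=1 $. Only $ w_{i} $ modulo $ p_{i}^{e_{i}} $ matters, so each $ w_{i} $ may be replaced by $ p_{i}^{a_{i}} $, where $ a_{i} $ is its valuation, capped at $ e_{i} $; in the case $ e_{i}=1 $, $ p_{i}=2 $, we take $ a_{i}=1 $.

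Next, put $ w = \prod p_{i}^{a_{i}} $, which divides $ n $. By the constraints above, every prime dividing $ n $ divides $ w $, and $ 4 \mid w $ whenever $ 4 \mid n $, since then $ e_{2}\ge 2 $ forces $ a_{2}\ge 2 $. The CRT isomorphism $ \Z_{n} \to \prod \Z_{p_{i}^{e_{i}}} $ is additive and carries $ i+j+wij $ to $ (i+j+wij \bmod p_{i}^{e_{i}})_{i} $. Modulo $ p_{i}^{e_{i}} $ we have $ w = p_{i}^{a_{i}} u_{i} $ with $ u_{i} $ a unit. Each factor $ (\Z_{p_{i}^{e_{i}}},+,\circ_{p_{i}^{a_{i}}u_{i}}) $ is isomorphic to $ (\Z_{p_{i}^{e_{i}}},+,\circ_{p_{i}^{a_{i}}}) $, either directly by the corollaries (which depend only on the valuation) or explicitly via $ x \mapsto u_{i}x $. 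So $ (\Z_{n},+,\circ_{w}) $ is isomorphic to the product, and hence to $ B $. The minor subtlety is the case $ a_{i} = e_{i} $, where $ w_{i}\equiv 0 $ gives the trivial brace. This is consistent with the corollaries, where $ k+1=e $ (odd) or $ k+2=e $ (even) corresponds to the trivial brace.

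The final step is distinctness. Suppose $ (\Z_{n},+,\circ_{w}) \cong (\Z_{n},+,\circ_{w'}) $. Applying Lemma \ref{lemma_decomposition} to both, and noting that an isomorphism must carry Sylow ideals to Sylow ideals, the $ p_{i} $-components are isomorphic for each $ i $. By the classification corollaries this forces $ v_{p_{i}}(w)=v_{p_{i}}(w') $ for each $ i $, hence $ w=w' $ since both divide $ n $ with the valuation ranges above. The main obstacle is this last bookkeeping: checking that the admissible valuations for each prime match exactly the $ e_{i} $ (odd) or $ e_{i}-1 $ (for $ 2 $, $ e_{i}\ge2 $) isomorphism classes, with no collisions at the trivial brace and the edge cases $ e_{2}\in\{1,2\} $ handled correctly.
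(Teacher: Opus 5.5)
Your proposal is correct and takes the same route as the paper. You decompose via Lemma \ref{lemma_decomposition} into Sylow ideals, classify each factor with Corollaries \ref{cor_circle_p} and \ref{cor_circle_2}, and recombine using the direct product structure. Your explicit CRT gluing, the unit rescaling $x \mapsto u_{i}x$ identifying $\circ_{p^{a}u}$ with $\circ_{p^{a}}$, and the distinctness argument via Sylow ideals make precise steps that the paper's brief proof leaves implicit.
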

\begin{proof}
By Lemma \ref{lemma_decomposition} a bicyclic brace with additive group $ (\Z_{n},+) $ is isomorphic to the direct product of bicyclic braces whose additive groups are the Sylow subgroups of $ (\Z_{n},+) $. Applying Corollary \ref{cor_circle_p} to each component of odd prime power order and Corollary \ref{cor_circle_2} to the component of $ 2 $-power order, and recalling the formula \eqref{align_direct_product_circ} for the circle operation in a direct product of skew braces, we see that the isomorphically distinct bicyclic braces of order $ n $ correspond to the divisors of $ n $ with the properties given and that the circle operations in each component combine to give \eqref{eqn_circle}. 
\end{proof}


\begin{proposition} \label{prop_biskew_general}
Let $ n \in \N $ and $ w $ be a divisor of $ n $ such that the binary operation $ \circ $ defined by \eqref{eqn_circle} makes $ (\Z_{n},+,\circ) $ into a bicyclic brace. Then $ (\Z_{n},+,\circ) $ is a biskew brace if and only if $ n \mid w^{2} $.  
\end{proposition}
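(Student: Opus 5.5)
We will use the criterion from Section \ref{sec_tools}: $(\Z_{n},+,\circ)$ is biskew if and only if $\lambda_{+}(\Z_{n}) \subseteq \Hol(\Z_{n},\circ)$. Equivalently, each translation $\lambda_{+}(a)$ must normalize $\lambda_{\circ}(\Z_{n})$. Since $(\Z_{n},\circ)$ is cyclic, we will instead apply the criterion symmetrically. As in Section \ref{sec_tools}, write $\lambda_{+}(a) = \lambda_{\circ}(a)\delta_{a}$, where $\delta_{a}(y) = \bar{a} \circ (a+y)$. Then $(\Z_{n},\circ,+)$ is a skew brace if and only if every $\delta_{a}$ is an automorphism of $(\Z_{n},\circ)$. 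The plan is to compute $\delta_{a}$ explicitly and test when it is a homomorphism for $\circ$.

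The first step is to express $\circ$ in a multiplicative form. Observe that $1 + w(i \circ j) = (1+wi)(1+wj)$ in $\Z$, and hence modulo $wn$. To avoid division by $w$, we will verify the required identities directly in $\Z_{n}$ by expanding polynomials instead. The inverse satisfies $\bar{a} \circ a = 0$, so $\bar{a}(1+wa) = -a$. Next we compute $\delta_{a}(y) = \bar{a} + a + y + w\bar{a}(a+y) = y + w\bar{a}y$, using $\bar{a}+a+w\bar{a}a = 0$. Thus $\delta_{a}(y) = (1+w\bar{a})y$, which is multiplication by a unit $u_{a}$, and $u_{a} = 1 + w\bar{a}$.

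The second step is to test the homomorphism property: $\delta_{a}(x \circ y) = \delta_{a}(x) \circ \delta_{a}(y)$ for all $x,y$. The left side is $u(x+y+wxy)$ and the right side is $ux + uy + wu^{2}xy$. Their difference is $wu(u-1)xy = w^{2}\bar{a}u\,xy$. Since $u$ is a unit, the condition for all $x,y$ (taking $x=y=1$) is $n \mid w^{2}\bar{a}$ for every $a$. As $a$ ranges over $\Z_{n}$ so does $\bar{a}$, so we may take $\bar{a}=1$. The condition therefore becomes exactly $n \mid w^{2}$. Conversely, if $n \mid w^{2}$, every $\delta_{a}$ is an endomorphism. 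It is bijective since $u$ is a unit, which we will check from $1+w\bar{a}$ with every prime of $n$ dividing $w$. So every $\delta_{a}$ is an automorphism, and the criterion gives biskewness.

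The main point needing care is justifying the reversed-roles criterion: $(B,\circ,+)$ is a skew brace exactly when each $\delta_{a}(y) = \bar{a}\circ(a+y)$ lies in $\Aut(B,\circ)$. This is the $\gamma$-description of Section \ref{sec_tools} with $+$ and $\circ$ swapped. It is equivalent to $\lambda_{+}(B) \subseteq \Hol(B,\circ)$, since $\lambda_{+}(a)\lambda_{\circ}(a)^{-1}$ fixes $e$ and lies in $\Hol(B,\circ)$ precisely when it lies in $\Aut(B,\circ)$. Beyond that, the argument is a routine polynomial computation in $\Z_{n}$.
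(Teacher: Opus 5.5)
Your proof is correct, but it takes a different route from the paper's.

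\textbf{The paper's route.} It uses that $\lambda_{+}(\Z_{n})$ and $\lambda_{\circ}(\Z_{n})$ are generated by $\lambda_{+}(1)$ and $\lambda_{\circ}(1)$; the latter holds because $1$ generates $(\Z_{n},\circ)$. The normalizing condition then reduces to a single requirement: $\lambda_{+}(1)\lambda_{\circ}(1)\lambda_{+}(-1)=\lambda_{\circ}(j)$ for some $j$. Evaluating at $i=0$ forces $j=1-w$, and comparing the terms in $i$ gives $w^{2}i\equiv 0$ for all $i$.

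\textbf{Your route.} You use the $\gamma$-map form of the criterion for the opposite structure $(\Z_{n},\circ,+)$. You compute $\delta_{a}=\lambda_{\circ}(a)^{-1}\lambda_{+}(a)$ explicitly as multiplication by $u_{a}=1+w\bar{a}$. You then isolate the obstruction $w^{2}\bar{a}u_{a}xy$ to $\delta_{a}$ being a $\circ$-homomorphism, and specialize to $a=\bar{1}$.

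\textbf{Comparison.} Your argument never needs the fact that $1$ generates $(\Z_{n},\circ)$. It also gives the $\gamma$-maps of $(\Z_{n},\circ,+)$ explicitly whenever it is a skew brace, at the cost of a slightly longer computation. The paper's argument is a single conjugation check that relies on the cyclicity of both groups.

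\textbf{A small slip.} In your last paragraph, the permutation that fixes $e$ is $\lambda_{\circ}(a)^{-1}\lambda_{+}(a)$, not $\lambda_{+}(a)\lambda_{\circ}(a)^{-1}$; the latter sends $e$ to $a+\bar{a}$.

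\textbf{A simplification.} $u_{a}$ being a unit, which you need for the cancellation, is automatic: $\delta_{a}$ is a composite of two permutations, so it is a bijection. You therefore do not need to appeal to the prime-divisibility condition on $w$.
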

\begin{proof}
Recall from Section \ref{sec_tools} that $ (\Z_{n},+,\circ) $ is a biskew brace if and only if the image of the left regular representation $ \lambda_{+} : \Z_{n} \rightarrow \Perm(\Z_{n}) $ normalizes the image of the left regular representation $ \lambda_{\circ} : \Z_{n} \rightarrow \Perm(\Z_{n}) $. Since $ (\Z_{n},+) $ and $ (\Z_{n},\circ) $ are both cyclic and generated by $ 1 $, this occurs if and only if $ \lambda_{+}(1) \lambda_{\circ}(1) \lambda_{+}(1)^{-1} \in \lambda_{\circ}(\Z_{n}) $. That is, if and only if 
\begin{equation} \label{eqn_biskew_condition}
\lambda_{+}(1) \lambda_{\circ}(1) \lambda_{+}(-1) = \lambda_{\circ}(j) \mbox{ for some } j.
\end{equation}
For $ i \in \Z_{n} $ we have
\begin{eqnarray*}
\lambda_{+}(1) \lambda_{\circ}(1) \lambda_{+}(-1) [i] & = & 1+( 1 \circ (i-1) ) \\
& = & 1 + 1+(i-1)+w(i-1) \\
& = & 1+i+w(i-1),
\end{eqnarray*}
whereas $ \lambda_{\circ}(j)[i] = i+j+wij $. Thus \eqref{eqn_biskew_condition} holds if and only if 
\[ 1+i+w(i-1) \equiv i+j+wij \pmod{n} \mbox{ for all } i. \]
Choosing $ i=0 $ we see that $ j=1-w $, so the condition becomes 
\[ 1+i+w(i-1) \equiv i+1-w+wi(1-w) \pmod{n} \mbox{ for all } i; \]
that is, $ w^{2} \equiv 0 \pmod{n} $. 
\end{proof}

\begin{example}
Suppose that $ n $ is odd and squarefree. Then by Proposition \ref{prop_circle_general} the only value of $ w $ for which $ (\Z_{n},+,\circ_{w}) $ is a bicyclic brace is $ w=n $. This yields the trivial skew brace $ (\Z_{n},+,+) $, which of course if biskew. 

Conversely, if $ n $ is odd and not squarefree then let $ p $ be a prime number such that $ p^{2} \mid n $ and let $ w = n/p $. Then $ (\Z_{n},+,\circ_{w}) $ is a nontrivial bicyclic brace. Moreover, since $ n \mid w^{2} $, this brace is biskew. 
\end{example}

\begin{example}
Suppose that $ n = (p_{1} \ldots p_{r})^{e} $ for distinct odd prime numbers $ p_{1}, \ldots ,p_{r} $. Then by Proposition \ref{prop_circle_general} the values of $ w $ for which $ (\Z_{n},+,\circ_{w}) $ is a bicyclic brace are precisely the numbers $ p_{1}^{e_{1}} \ldots p_{r}^{e_{r}} $ with $ 1 \leq e_{i} \leq e $ for each $ i $, and a bicylic brace of this form is biskew if and only if $ e \leq 2e_{i} $ for each $ i $. Hence there are $ e^{r} $ isomorphically distinct bicyclic braces in this case, of which $ \lfloor (e+2)/2 \rfloor ^{r} $ are biskew. 
\end{example}

\begin{example}
Let $ n = 7500 = 2^2 \cdot 3^1 \cdot 5^4$. By Proposition \ref{prop_circle_general} the isomorphically distinct bicyclic braces of order $ 7500 $ are given by $ (\Z_{n},+,\circ_{w}) $ with $ w=60, 300, 1500, $ or $ 7500 $, with the final three of these being biskew. 

If (for example) $ w = 10 $ then $ \circ_{w} $ does not give a group structure on $ \Z_{n} $: for example, the element $ 5000 $ is idempotent with respect to $ \circ_{10} $. 

If $ w = 60 $ then $ (\Z_{n},+,\circ_{w}) $ is a bicyclic brace, but (for example) 
\[ 1+(2 \circ 3) = 1 + 5 + (2 \cdot 3 \cdot 60) = 366, \]
whereas 
\[  (1 + 2) \circ \bar{1} \circ (1+3) = 3 \circ 2459 \circ 4 = 7266, \]
so this brace is not biskew. 
\end{example}

\bibliography{Omahabib}
\bibliographystyle{plain}

\end{document}